\numberwithin{equation}{section}
\numberwithin{figure}{section}
\numberwithin{theorem}{section}
\numberwithin{remark}{section}
\newtheorem{Theorem}{Theorem}
\newtheorem{Lemma}[Theorem]{Lemma}
\newtheorem{Remark}[Theorem]{Remark}
\newtheorem{Example}[Theorem]{Example}
\newtheorem{Proposition}[Theorem]{Proposition}
\newtheorem{Definition}[Theorem]{Definition}
\newtheorem{Corollary}[Theorem]{Corollary}
\def\be{\boldsymbol e}
\def\R{\mathbb{R}}
\newcommand{\bthe}{\begin{Theorem}}
\newcommand{\ethe}{\end{Theorem}}
\newcommand{\ble}{\begin{Lemma}}
\newcommand{\ele}{\end{Lemma}}
\newcommand{\bde}{\begin{Definition}}
\newcommand{\ede}{\end{Definition}}
\newcommand{\bco}{\begin{Corollary}}
\newcommand{\eco}{\end{Corollary}}
\newcommand{\bpr}{\begin{Proposition}}
\newcommand{\epr}{\end{Proposition}}
\newcommand{\brem}{\begin{Remark}}
\newcommand{\erem}{\end{Remark}}
\newcommand{\bexam}{\begin{Example}}
\newcommand{\eexam}{\end{Example}}
\newcommand{\beqq}{\begin{equation}}
\newcommand{\eeqq}{\end{equation}}
\newcommand{\beao}{\begin{eqnarray*}}
\newcommand{\eeao}{\end{eqnarray*}\noindent}
\newcommand{\beam}{\begin{eqnarray}}
\newcommand{\eeam}{\end{eqnarray}\noindent}
\newcommand{\barr}{\begin{array}}
\newcommand{\earr}{\end{array}}
\newcommand{\sid}[1]{{\color{black} #1}}
\newcommand\independent{\protect\mathpalette{\protect\independenT}{\perp}}
\def\independenT#1#2{\mathrel{\rlap{$#1#2$}\mkern2mu{#1#2}}}
\newcommand{\dd}{\mathrm{d}}
\newcommand{\PP}{\textbf{P}}
\newcommand{\EE}{\textbf{E}}
\newcommand{\ind}{\textbf{1}}
\newcommand{\convp}{\stackrel{P}{\rightarrow}}
\newcommand{\convas}{\stackrel{\text{a.s.}}{\longrightarrow}}
\newcommand{\RR}{\mathbb{R}}
\newcommand{\supy}{\sup_{y\ge 1}}
\newcommand{\argmin}{\operatornamewithlimits{argmin}}
\newcommand{\jtimei}{\tau^{(i)}}
\newcommand{\bfD}{\boldsymbol{\mathcal{D}}}
\newcommand{\wtd}[1]{{\color{black} #1}}
\begin{document}
\bibliographystyle{plainnat}

\title{Consistency of Hill Estimators in a Linear Preferential Attachment Model\thanks{This work was supported by Army MURI grant W911NF-12-1-0385 to Cornell University.}}
\titlerunning{Hill Estimator for Network Data}

\author{Tiandong Wang \and Sidney I. Resnick} 
\authorrunning{T. Wang \and S.I. Resnick}
\institute{Tiandong Wang \at
              School of Operations Research and Information Engineering, Cornell University,
Ithaca, NY 14853 \\
              \email{tw398@cornel.edu}           
           \and
           Sidney I. Resnick \at
              School of Operations Research and Information Engineering, Cornell University,
Ithaca, NY 14853 \\
\email{sir1@cornel.edu}
}
\date{Received: November 15, 2017}
\maketitle

\begin{abstract}
Preferential attachment is widely used to model power-law behavior of
degree distributions in both directed and undirected networks. 
Practical analyses on the tail exponent of the power-law degree distribution
use Hill estimator as one of the key summary statistics, whose consistency is justified mostly for iid data.
The major goal in this paper is to answer the question whether the Hill estimator is still consistent when applied to non-iid network data.
To do this, we first derive the asymptotic behavior of the degree sequence via embedding the degree growth of a fixed node into 
a birth immigration process. We also need to show the convergence of the tail empirical measure, from which
the consistency of Hill estimators is obtained. This step requires checking the concentration of degree counts. 
We give a proof for a particular linear preferential attachment model and use 
simulation results as an illustration in other choices of models.
\keywords{Hill estimators \and Power laws \and Preferential Attachment\and Continuous Time Branching Processes}
\subclass{60G70 \and 60B10 \and 60G55 \and 60G57 \and 05C80 \and 62E20}
\end{abstract}

\section{Introduction.}
The preferential attachment model gives a random graph in which nodes
and edges are added to the network based on probabilistic rules,  
and is used to \sid{mimic} the evolution of networks such as social networks, collaborator and citation networks, as well as recommender networks. The probabilistic rule depends on the node degree and captures the feature 
that nodes with larger degrees tend to attract more edges. 
Empirical analysis of social network data shows that degree
distributions follow power laws. Theoretically, this is
true  for linear preferential attachment models \sid{which} makes
preferential attachment a popular choice for network modeling
\citep{bollobas:borgs:chayes:riordan:2003, durrett:2010b,
  krapivsky:2001,krapivsky:redner:2001,vanderHofstad:2017}. 
The preferential attachment mechanism has been applied to both directed and undirected graphs. 
Limit theory for degree counts can be found in \citet{resnick:samorodnitsky:2016b}, \citet{bhamidi:2007}, \citet{krapivsky:redner:2001} for the undirected case and \citet{wang:resnick:2015}, \citet{resnick:samorodnitsky:towsley:davis:willis:wan:2016}, \citet{resnick:samorodnitsky:2015}, \citet{wang:resnick:2016}, \citet{krapivsky:2001} for the directed case.
This paper only focuses on the undirected case.

One statistical issue is how to estimate the index of the \sid{degree distribution} power-law
tail. In practice, this is often done by combining a minimum distance
method \cite{clauset:shalizi:newman:2009} with the Hill
estimator \sid{\cite{hill:1975}.} 
\sid{Data repositories of large network datasets such as KONECT (http://konect.uni-koblenz.de/)
\cite{kunegis:2013}  provide for each dataset key summary
statistics including Hill estimates of degree distribution tail
indices. However, there is no theoretical
justification for such estimates and}
consistency of the Hill estimator has been 
proved only for data from a stationary sequence of random variables,
which is assumed to be either iid \cite{mason:1982} or satisfy certain
structural or mixing 
assumptions, e.g. \cite{resnick:starica:1995, 
  resnick:starica:1998b, rootzen:leadbetter:dehaan:1990,hsing:1991}. 
Therefore, proving/disproving the
consistency of Hill estimators for network data is a  major concern in this paper. 

\sid{The Hill estimator and other tail descriptors are often analyzed using
the tail empirical estimator. Using standard point measure notation,
let}
$$\epsilon_x(A)=\begin{cases} 1,& \text{ if }x\in A,\\ 0,& \text{ if
  }x\notin A \end{cases}.$$
For \sid{positive iid random variables} $\{X_i:i\geq 1\}$ whose
\sid{distribution has a regularly varying tail} with index $-\alpha<0$, 
we have the following convergence in the space of Radon measures on
$(0,\infty]$ of the  \sid{sequence of}  empirical measures
\beqq\label{conv1}
\sum_{i=1}^n\epsilon_{X_i/b(n)}(\cdot)\Rightarrow
\text{PRM}(\nu_\alpha (\cdot)), \quad\text{with }\quad\nu_\alpha(y,\infty] = y^{-\alpha},y>0,
\eeqq
to the limit Poisson random measure with mean measure
$\nu_\alpha(\cdot).$ 
From \eqref{conv1} other \sid{extremal properties of $\{X_n\}$}
follow \cite[Chapter 4.4]{resnick:1987}. \sid{See for example the
application 
given in this paper after Theorem \ref{thm:tail_meas}.}
Further, for \sid{any} intermediate sequence $k_n\to\infty$,
$k_n/n\to 0$ as $n\to\infty$, the sequence of tail empirical measures also
\sid{converge to a deterministic limit,}
\beqq\label{conv2}
\frac{1}{k_n}\sum_{i=1}^n\epsilon_{X_i/b(n/k_n)}(\cdot) \Rightarrow
\nu_\alpha (\cdot),\eeqq
which is \sid{one way} to prove consistency of the Hill estimator for iid data \citet[Chapter 4.4]{resnickbook:2007}.
\sid{We seek a similar dual pair as \eqref{conv1} and \eqref{conv2} for
network models that facilitates the study of the Hill estimator and
extremal properties of node degrees.}

With this goal in mind, we first find the limiting distribution for the degree sequence in a linear preferential attachment model,
from which a similar convergence result to \eqref{conv1} follows.
Embedding the network growth model into a continuous time branching process 
(cf. \citet{bhamidi:2007, athreya:2007, athreya:ghosh:sethuraman:2008}) is a useful tool in this case. 
We model the growth of the degree of each single node as a birth
process with immigration. Whenever a new node is added 
\sid{to} the network, a new birth immigration process is
initiated. In this \sid{embedding}, the total number of nodes in the network
growth model also forms a birth immigration process. \sid{Using} results
from the limit theory of continuous time branching processes
(cf. \citet[Chapter~5.11]{resnick:1992}; \citet{tavare:1987}), we
\sid{give} the limiting distribution of the degree of a fixed
node as well as the maximal degree growth. 

\sid{Empirical evidence for simulated networks 
lead\wtd{s} to the belief that the} Hill estimator is consistent. However,
proving the analogue of \eqref{conv2} is challenging and 
requires showing concentration inequalities for \sid{expected} degree
counts. We \sid{have} only succeeded for a particular linear preferential
attachment model, where each new node must attach to one of the
existing nodes in the graph. \sid{We are not sure the concentration
inequalities always hold for preferential attachment} and discussion of
limitations of the Hill estimator for network data must be left for the
future.
\sid{For a more sophisticated model where we could not verify the
concentration inequalities}, we illustrate consistency of the Hill
estimator coupled with \wtd{a minimum distance method (introduced in \cite{clauset:shalizi:newman:2009})} via
simulation for a range of parameter values; however the
asymptotic distribution of the Hill estimator in this case is
confounding and it is not obviously normal.
\sid{Whether this possible non-normality is due to the minimum distance
threshold selection or due to network data (rather than iid data)
being used, we are not sure at this point.}

The rest of the paper is structured as follows. \sid{After giving
background
on the tail empirical measure and Hill estimator in the rest of this section},
Section~\ref{sec:motiv} gives two linear preferential attachment
models.
Section~\ref{sec:prelim} summarizes  \sid{key facts about the pure birth
 and the birth-immigration processes.}
We analyze  \sid{social network} degree growth in Section~\ref{sec:embed} using  a sequence of
birth-immigration processes and \sid{give} the limiting 
\sid{empirical measures of normalized degrees in the style of \eqref{conv1}}
for both models under consideration. We prove
the consistency of the Hill estimator for the simpler model in
Section~\ref{sec:Hill} and give simulation results in Section~\ref{sec:sim}
that illustrate  the behavior of Hill estimators in the
other model. 

\sid{Parameter estimation based on maximum likelihood or approximate
  MLE for {\it directed\/}
preferential attachment models is studied in
\cite{wan:wang:davis:resnick:2017}.  A comparison between MLE model
based methods and asymptotic extreme value methods is forthcoming.}

\subsection{Background}
\sid{Our approach to the Hill estimator considers it as a functional
of the tail empirical measure so we start with necessary background}
and review  standard results (cf. \citet[Chapter 3.3.5]{resnickbook:2007}).  

For $\mathbb{E} = (0,\infty]$, let $M_+(\mathbb{E})$  be the set
of  non-negative Radon measures on $\mathbb{E}$. 
A point measure $m$ is an element of $M_+(\mathbb{E})$ of the form
\beqq\label{eq:pm}
m = \sum_i\epsilon_{x_i}.
\eeqq
The set $M_p(\mathbb{E})$ is the set of all Radon point measures of the form \eqref{eq:pm}
and $M_p(\mathbb{E})$ is a closed subset of $M_+(\mathbb{E})$ in
the vague metric.

For $\{X_n, n\geq 1\}$ iid  and non-negative with common
\sid{regularly varying} distribution
\sid{tail} $\overline{F}\in RV_{-\alpha}$, $\alpha>0$, there  
exists a sequence $\{b(n)\}$ such that  for a limiting Poisson random
measure with mean measure $\nu_\alpha $ and $\nu_\alpha(y,\infty] =
y^{-\alpha}$ for $y>0$, written \wtd{as} PRM($\nu_\alpha$), we have\beqq\label{eq:bnX}
\sum_{i=1}^n\epsilon_{X_i/b(n)}\Rightarrow \text{PRM}(\nu_\alpha),\quad\text{ in }M_p((0,\infty]),
\eeqq
and for some $k_n\to\infty$, $k_n/n\to 0$,
\beqq\label{eq:bnkX}
\frac{1}{k_n}\sum_{i=1}^n\epsilon_{X_i/b(n/k_n)}\Rightarrow \nu_\alpha,\quad\text{ in }M_+((0,\infty]),
\eeqq
\sid{Note the limit in \eqref{eq:bnX} is random while that in
\eqref{eq:bnkX} is deterministic.} 
\sid{Define the Hill estimator $H_{k,n}$ based on $k$ upper order statistics of
$\{X_1,\dots,X_n\}$} as in \cite{hill:1975}
\[
H_{k,n} := \frac{1}{k}\sum_{i=1}^{k}\log\frac{X_{(i)}}{X_{(k+1)}},
\]
where $X_{(1)}\ge X_{(2)}\ge \ldots\ge X_{(n)}$ are order statistics of $\{X_i:1\le i\le n\}$.
In the iid case there are many proofs of consistency (cf. \citet{mason:1982,mason:turova:1994,hall:1982,dehaan:resnick:1998,
  csorgo:haeusler:mason:1991a}):  For
$k=k_n\to\infty,\,k_n/n\to 0$, we have
\begin{equation}\label{e:Hillconv}
H_{k_n,n} \convp 1/\alpha\qquad\text{as }n\to\infty.
\end{equation}

\sid{The treatment in \citet[Theorem~4.2]{resnickbook:2007} approaches 
consistency by showing \eqref{e:Hillconv} follows from \eqref{eq:bnkX}
and we follow this approach for the network context where the iid case
is inapplicable.}
\sid{The next section constructs two undirected preferential attachment models,
labelled A and B, and gives behavior of $D_i(n)$, the degree of node
$i$ at the $n$th stage of construction.} Theorem~\ref{thm:tail_meas}
shows that for $\delta$, a parameter in the model construction, 
the degree sequences in either Model A or B have  empirical measures
\beqq\label{emp_meas}
\sum_{i=1}^n \epsilon_{D_i(n)/n^{1/(2+\delta)}}
\eeqq
that converge weakly to some \sid{random} limit point measure in $M_p((0,\infty])$.
The question then becomes whether there is an analogy to
\eqref{eq:bnkX} in the network
 case so that 
\beqq\label{eq:bnkD}
\frac{1}{k_n}\sum_{i=1}^n\epsilon_{D_i(n)/b(n/k_n)}\Rightarrow \nu_{2+\delta},\quad\text{ in }M_+((0,\infty]),
\eeqq
with some function $b(\cdot)$ and  intermediate sequence $k_n$. \sid{This
would facilitate proving consistency of the Hill estimator.}
We \sid{successfully} derive \eqref{eq:bnkD} for Model A in
Section~\ref{sec:Hill} and discuss why we failed  for Model B.
For Model A, we give the consistency of the Hill estimator.

\section{Preferential Attachment Models.}\label{sec:motiv}
\subsection{Model setup.}\label{subsec:PA}
We consider an undirected preferential attachment model initiated from
 the initial graph  $G(1)$, which  
consists of one node $v_1$ and a self loop. Node $v_1$ then has degree 2 at stage $n=1$. For $n\ge \sid{1}$,
we obtain a new graph $G(n+1)$ by appending a new node $v_{n+1}$ to the existing graph $G(n)$.
The graph $G(n)$ consists of $n$ edges and $n$ nodes. Denote the set
of nodes in $G(n)$ by $V(n) := \{v_1, v_2, \ldots, v_n\}$. 
For $v_i\in V(n)$,  $D_i(n)$ is the degree of $v_i$ in $G(n)$.
We consider two ways to construct the random graph and refer to them as Model A and B.\smallskip\\
\textbf{Model A}: 
Given $G(n)$,
the new node $v_{n+1}$ is connected to one of the existing nodes $v_i\in V(n)$ with probability
\beqq\label{eq:prob1}
\frac{f(D_i(n))+\delta}{\sum_{i=1}^n \left(f(D_i(n))+\delta\right)},
\eeqq
where \sid{{\it the preferential attachment function\/}} $f(j), j\ge 1$  is deterministic and \sid{non-decreasing}.
In this case, the new node $\sid{v_{n+1}}$ for $n\geq 1$, is always born with degree 1.\smallskip\\
\textbf{Model B}: In this model, 
given graph $G(n)$, the graph $G(n+1)$ is obtained by \sid{either:}
\begin{itemize}
\item \sid{Adding a new node $v_{n+1}$ and a new edge connecting} to \sid{an} existing node $v_i\in V(n)$ with probability
\beqq\label{eq:prob2}
\frac{f(D_i(n))+\delta}{\sum_{i=1}^n \left(f(D_i(n))+\delta\right) + f(1)+\delta},
\eeqq
where $\delta >-f(1)$ is a parameter;

\medskip
or
\medskip

\item \sid{Adding a new node $v_{n+1}$} with a self loop  with probability
\beqq\label{eq:prob3}
\frac{f(1)+\delta}{\sum_{i=1}^n \left(f(D_i(n))+\delta\right) + f(1)+\delta}.
\eeqq
\end{itemize}

\sid{{\it Linear case\/}:} If the preferential attachment function is $f(j) = j$ for
$j= 1,2,\ldots$, then \sid{the model is called} the \emph{linear preferential attachment model}. 
Since every time we add a node and an edge the degree of 2 nodes is
increased by 1, \sid{we have for both model A or B that}
$
\sum_{i=1}^{n} D_i(n) = 2n, \,n\ge 1.
$ 
\sid{Therefore,} the attachment probabilities in \eqref{eq:prob1}, \eqref{eq:prob2} and \eqref{eq:prob3} are
\[
\frac{D_i(n)+\delta}{(2+\delta)n},\quad\frac{D_i(n)+\delta}{(2+\delta)n + 1+\delta}, \quad\text{  and  }\quad  \frac{1+\delta}{(2+\delta)n + 1+\delta},
\]respectively, where $\delta>-1$ is a constant.

\subsection{Power-law tails.}
\sid{Continuing with $f(j)=j$,}  suppose $G(n)$ is a random graph generated by either Model A or B after $n$ steps. 
Let $N_k(n)$ be the number of nodes in $G(n)$ with degree equal to $k$, i.e.
\beqq\label{eq:defN}
N_k(n) := \sum_{i=1}^n \ind_{\{D_i(n) = k\}}, 
\eeqq
then $N_{>k} (n) := \sum_{j>k} N_j(n)$, $k\ge 1$, is the number of nodes in $G(n)$ with degree strictly greater than $k$.
For $k= 0$, we set $N_{>0}(n) = n$. 

\sid{For both models A and B,} it \sid{is}
shown in \citet[Theorem~8.3]{vanderHofstad:2017} 
using concentration inequalities and martingale methods
that 
for fixed $k\ge \wtd{1}$, as $n\to\infty$,
\beqq\label{eq:pmfk}
\frac{N_k(n)}{n}\convp p_k =
(2+\delta)\frac{\Gamma(k+\delta)\Gamma(3+2\delta)}{\Gamma(k+3+2\delta)\Gamma(1+\delta)}\sid{\sim 
(2+\delta) \frac{\Gamma(3+2\delta)}{\Gamma(1+\delta)}k^{-(3+\delta)}; }
\eeqq
 $\left(p_k\right)_{k\ge 0}$ is a pmf \sid{and the asymptotic form, as
 $k\to\infty$,  follows
 from Stirling.}
\sid{Let $p_{>k}=\sum_{j>k}p_j$ be the complementary \wtd{cdf} and}
by Scheff\'e's lemma as well as
\citet[Equation (8.4.6)]{vanderHofstad:2017}, we have 
\beqq\label{eq:def_pk}
\frac{N_{>k}(n)}{n}\convp p_{>k} :=
\frac{\Gamma(k+1+\delta)\Gamma(3+2\delta)}{\Gamma(k+3+2\delta)\Gamma(1+\delta)}, 
\eeqq 
and again by Stirling's formula we get from  \eqref{eq:def_pk} as $k\to\infty$,
\[
p_{>k}\sim \sid{c\cdot } k^{-(2+\delta)}, \qquad  c=\frac{\Gamma(3+2\delta)}{\Gamma(1+\delta)}.
\]
In other words, the tail distribution of the asymptotic degree
sequence in a linear preferential attachment model is \sid{asymptotic}
to a power law with tail index $2+\delta$. 

In practice, the Hill estimator is widely used to estimate this tail
index.
\sid{Absent prior justification for using the Hill estimator on network
data, we investigate its use.}

\section{Preliminaries: Continuous Time Markov Branching Processes.}\label{sec:prelim}
In this section, we \sid{review} two continuous time Markov branching
processes \sid{needed} in Section~\ref{subsec:embed}, where we
embed the degree sequence of a fixed network node  into a
continuous time branching process and derive the asymptotic limit of
the degree growth. 

\subsection{Linear birth processes.}
A linear birth process $\{\zeta(t):t\ge 0\}$ is a continuous time Markov process taking values in the set $\mathbb{N}^+ =\{1,2,3,\ldots\}$ and having a transition rate
$$ q_{i,i+1} = \lambda i, \qquad i\in \mathbb{N}^+, \quad\lambda> 0.$$
The linear birth process $\{\zeta(t):t\ge 0\}$ is a mixed Poisson
process; \wtd{see
\citet[Theorem~5.11.4]{resnick:1992},
\citet{kendall:1966} and \citet{waugh:1971} among other sources.} If $\zeta(0)=1$ then the representation is
\beqq
\zeta(t)=1+N_0\bigl(W(e^{\lambda t} -1\bigr), \,t\geq 0,
\eeqq
where \wtd{$\{N_0(t): t\geq 0\}$ is a} unit rate homogeneous Poisson on $\R_+$ with
$N_0(0)=0$ and $W\independent N_0(\cdot)$ is a unit exponential random
variable independent of $N_0$. 
Since $N_0(t)/t\to 1$ almost surely \wtd{as $t\to\infty$}, it follows immediately that
\beqq \label{e:PBconv}
\frac{\zeta(t)}{e^{\lambda t}}\convas W,\qquad\text{as }t\to\infty.
\eeqq

We use these facts in Section~\ref{subsec:asy} to analyze
the asymptotic behavior of the degree growth in a preferential
attachment network. 

\subsection{Birth processes with immigration.}
Apart from  individuals within the population giving birth to new
individuals, population size can also increase due to 
immigration which is assumed independent of  births.
The linear birth process with immigration (B.I. process),
$\{BI(t): t\ge 0\}$, having lifetime parameter $\lambda>0$ and
immigration parameter $\theta\ge 0$ is a continuous time Markov
process with state space $\mathbb{N} =\{0,1,2,3,\ldots\}$ and
transition rate  
$$q_{i,i+1} = \lambda i + \theta.$$
When $\theta = 0$ there is no immigration and the B.I. process becomes
a pure birth process.

For $\theta>0$, the B.I. process starting from 0 can be
constructed from a Poisson process and an independent family of iid
linear birth processes \cite{tavare:1987}.
Suppose that $N_\theta (t)$ is the counting function of homogeneous
Poisson points $0<\tau_1<\tau_2<\ldots$ with rate
$\theta$ and independent of this Poisson process we have
 independent
copies of a linear birth process $\{\zeta_i(t):t\ge 0\}_{i\ge 1}$
with parameter $\lambda>0$ and $\zeta_i(0) = 1$ for $i\ge 1$.  
Let $BI(0) = 0$, then the  B.I. process is a shot noise process with form
\beqq\label{eq:defBI}
BI(t) := \sum_{i=1}^\infty
\zeta_i(t-\tau_i)\ind_{\{t\ge\tau_i\}}=\sum_{i=1}^{N_\theta(t) } \zeta_i(t-\tau_i).
\eeqq

\sid{
Theorem~\ref{thm:tavare} modifies slightly} the statement of 
\citet[Theorem~5]{tavare:1987}  summarizing  the asymptotic behavior of the B.I. process.
\begin{Theorem}\label{thm:tavare}
For $\{BI(t):t\ge 0\}$ as in \eqref{eq:defBI}, we have as $t\to\infty$,
\beqq \label{e:sigma}
e^{-\lambda t}BI(t) \convas \sum_{i=1}^\infty W_i e^{-\lambda\tau_i} \sid{=:\sigma}
\eeqq
where $\{W_i: i\ge 1\}$ are independent unit exponential random
variables satisfying for each $i\ge 1$,
   $$W_i=\lim_{t\to\infty} e^{-t}\zeta_i(t).$$
The random variable $\sigma$ in \eqref{e:sigma}
is a.s. finite and has a Gamma density given by
\[
f(x) = \frac{1}{\Gamma(\theta/\lambda)}x^{\theta/\lambda-1} e^{-x},\qquad x>0.
\]
\end{Theorem}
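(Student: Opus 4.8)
The plan is to treat the distributional claim and the almost sure convergence separately, and to identify the explicit series $\sigma=\sum_i W_ie^{-\lambda\tau_i}$ with the martingale limit at the very end.

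I would dispatch the law of $\sigma$ first, since it is the most direct. Conditioning on the immigration epochs $\{\tau_i\}$ and using that the $\{W_i\}$ are iid unit exponentials independent of $N_\theta$, together with $\E e^{-aW}=(1+a)^{-1}$ for a unit exponential $W$, gives
\[
\E\bigl[e^{-s\sigma}\,\big|\,\{\tau_i\}\bigr]=\prod_{i\ge1}\frac{1}{1+se^{-\lambda\tau_i}}.
\]
Taking expectations and invoking the Laplace functional of the rate-$\theta$ Poisson process on $(0,\infty)$ reduces everything to the elementary integral $\int_0^\infty \tfrac{se^{-\lambda t}}{1+se^{-\lambda t}}\,dt=\lambda^{-1}\log(1+s)$, so that $\E e^{-s\sigma}=(1+s)^{-\theta/\lambda}$. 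This is exactly the Laplace transform of the stated Gamma$(\theta/\lambda,1)$ density; finiteness of the transform (and $\E e^{-s\sigma}\to1$ as $s\downarrow0$) already certifies that $\sigma<\infty$ a.s.

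For the convergence I would produce a martingale from the generator $\mathcal{A}h(i)=(\lambda i+\theta)\bigl(h(i+1)-h(i)\bigr)$. A short computation shows $\tfrac{d}{dt}\E\,BI(t)=\lambda\,\E\,BI(t)+\theta$, hence $\E\,BI(t)=\tfrac{\theta}{\lambda}(e^{\lambda t}-1)$, and, more usefully, that the space--time function $(t,i)\mapsto e^{-\lambda t}(i+\theta/\lambda)$ is harmonic for $\mathcal{A}$. Thus $M(t):=e^{-\lambda t}\bigl(BI(t)+\theta/\lambda\bigr)$ is a nonnegative martingale and converges a.s. to a finite limit; since $e^{-\lambda t}\theta/\lambda\to0$, this yields $e^{-\lambda t}BI(t)\convas\sigma'$ for some a.s. finite $\sigma'$. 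A parallel second-moment recursion through $\mathcal{A}$ gives $\E\,BI(t)^2=O(e^{2\lambda t})$, so $M$ is $L^2$-bounded, hence uniformly integrable, and therefore $\E\sigma'=\E M(\infty)=M(0)=\theta/\lambda$.

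The crux is to identify $\sigma'$ with the explicit series $\sigma$, since martingale convergence alone delivers only \emph{some} limit. Here I would use nonnegativity and the shot-noise representation \eqref{eq:defBI}. Writing $e^{-\lambda t}BI(t)=\sum_{i=1}^{N_\theta(t)}e^{-\lambda\tau_i}\,e^{-\lambda(t-\tau_i)}\zeta_i(t-\tau_i)$ and applying the single-process limit \eqref{e:PBconv} termwise, the first $K$ summands converge as $t\to\infty$ to $\sum_{i=1}^K e^{-\lambda\tau_i}W_i$; since every summand is nonnegative, $\sigma'\ge\sum_{i=1}^K e^{-\lambda\tau_i}W_i$ for each $K$, and letting $K\to\infty$ gives $\sigma'\ge\sigma$ a.s. Finally, Campbell's formula yields $\E\sigma=\E W_1\int_0^\infty e^{-\lambda t}\theta\,dt=\theta/\lambda=\E\sigma'$. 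An almost sure inequality combined with equal finite means forces $\sigma'=\sigma$ a.s., completing the identification and hence the theorem. I expect this last matching-of-means step, together with the $L^2$ bound that powers the uniform integrability it relies on, to be the main technical work.
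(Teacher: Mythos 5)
Your proof is correct, but it is worth noting that the paper does not actually prove this theorem at all: it defers to \citet[Theorem~5]{tavare:1987}, remarking only that the form of $\sigma$ ``can be guessed from \eqref{eq:defBI} and some cavalier interchange of limits and infinite sums'' and that the Gamma density ``comes from transforming Poisson points $\{(W_i,\tau_i)\}$, summing and recognizing a Gamma L\'evy process at $t=1$.'' What you have supplied is a self-contained proof whose two halves rigorize exactly those two remarks. Your Laplace-functional computation $\E e^{-s\sigma}=\exp\bigl(-\theta\int_0^\infty \frac{se^{-\lambda t}}{1+se^{-\lambda t}}\,dt\bigr)=(1+s)^{-\theta/\lambda}$ is precisely the ``Gamma L\'evy process at $t=1$'' identification, made explicit; and your treatment of the almost sure convergence replaces the cavalier interchange with a genuine argument: the space--time harmonic martingale $M(t)=e^{-\lambda t}(BI(t)+\theta/\lambda)$ supplies existence of an a.s.\ limit $\sigma'$, the termwise application of \eqref{e:PBconv} to the nonnegative shot-noise summands gives $\sigma'\ge\sigma$, and matching means forces equality. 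That last device is the real content, since martingale convergence alone cannot identify the limit. One simplification: the $L^2$ bound and uniform integrability are dispensable, because Fatou's lemma applied to the nonnegative martingale already gives $\E\sigma'\le\liminf_{t\to\infty}\E M(t)=\theta/\lambda$, which together with $\sigma'\ge\sigma$ and Campbell's formula $\E\sigma=\theta/\lambda$ yields $\E(\sigma'-\sigma)\le 0$ and hence $\sigma'=\sigma$ a.s.; so the second-moment recursion can be dropped entirely. What your route buys over the paper's is independence from Tavar\'e's genealogical decomposition; what the citation buys the paper is brevity, since the result is standard.
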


The form of $\sigma$  in \eqref{e:sigma} and its Gamma density is justified in
\cite{tavare:1987}. It can be guessed from
\eqref{eq:defBI} and some \wtd{cavalier} interchange of limits and infinite
sums. The density of $\sigma$ comes from transforming Poisson points
$\{(W_i,\tau_i), i \geq 1\}$, summing and recognizing a Gamma L\'evy
process at $t=1$.

\section{Embedding Process.}\label{sec:embed}
\sid{Our approach to the weak convergence of the \sid{sequence of empirical measures} in
\eqref{emp_meas} embeds the degree sequences $\{D_i(n), 1 \leq i
\leq n, n \geq 1\}$
 into a B.I. process. The embedding idea is proposed in
 \cite{athreya:ghosh:sethuraman:2008} and we tailor it for our setup
 finding it 
 flexible enough to accommodate both linear preferential attachment
 Models A and B introduced in Section~\ref{subsec:PA}.}

\subsection{Embedding.}\label{subsec:embed}
\sid{Here is how we embed the network growth model} using a sequence of independent B.I. processes.

\subsubsection{Model A and B.I. processes.}
 Model A is the simpler case where \sid{a new node is not allowed to
have self loop.}
Let $\{BI_i(t):t\ge 0\}_{i\ge 1}$ be independent B.I. processes such that
\beqq\label{eq:initial}
BI_1(0) = 2, \quad BI_i(0) = 1,\quad \forall i\ge 2.
\eeqq
\sid{Each has} transition rate is $\sid{q_{j,j+1}}=j+\delta$,
$\delta>-1$.
For  $i\ge 1$, let $\{\jtimei_k: k\ge 1\}$ be the jump times of
the B.I. process $\{BI_i(t): t\ge 0\}$
\sid{and set} $\jtimei_0 := 0$ for all $i\ge 1$. 
Then  for $k\ge 1$,
\[
BI_1(\tau^{(1)}_k) = k+2,\qquad BI_i(\jtimei_k) = k+1, \, i\ge 2.
\]
Therefore, 
\[
\tau^{(1)}_k - \tau^{(1)}_{k-1} \sim \text{Exp}(k+1+\delta),\quad\mbox{and }\quad
\jtimei_{k}-\jtimei_{k-1} \sim \text{Exp}(k+\delta), \, i\ge 2.
\]
and $\{\jtimei_{k}-\jtimei_{k-1}: i\ge 1, k\ge 1\}$ are independent.

Set $T_1^A=0$ and relative to  $BI_1(\cdot)$  define
\beqq\label{eq:defT2}
T^A_2:= \tau^{(1)}_1,
\eeqq
i.e. the first time that $BI_1(\cdot)$ jumps. 
Start \sid{the} new B.I. process $\{BI_2(t-T^A_2):{t\ge T^A_2}\}$ 
at $T^A_2$ and 
let $T^A_3$ be the first time after $T^A_2$ that either $BI_1(\cdot)$
or $BI_2,(\cdot)$ jumps so that,
\[
T^A_3 = \text{min}\{T^A_{i}+\jtimei_k: k\ge 1, T^A_{i}+\jtimei_k>T^A_2, i=1,2\}.
\]
Start a new, independent B.I. process $\{BI_3(t-T^A_3)\}_{t\ge T^A_3}$
at $T^A_3$. See  Figure~\ref{fig:embed}, \sid{which assumes} $
\tau_1^{(2)}+T_2^A< \tau_2^{(1)}$. 
Continue in this way. When $n$ lines have been created, define
$T^A_{n+1}$ to be the first time after $T^A_n$ that one of the processes
$\{BI_i(t-T^A_i): t\ge T^A_i\}_{1\le i\le n}$ jumps, i.e.
\beqq\label{eq:defTn}
T^A_{n+1} := \text{min}\{T^A_{i}+\jtimei_k: k\ge 1, T^A_{i}+\jtimei_k>T^A_{n}, 1\le i\le n\}.
\eeqq
At $T^A_{n+1}$, start a new, independent B.I. process
$\{BI_{n+1}(t-T^A_{n+1})\}_{t\ge T^A_{n+1}}$.

\newsavebox{\mytikzpic}
\begin{lrbox}{\mytikzpic} 
\begin{tikzpicture}[scale=1.15]
\begin{scope}
\draw [thick,->] (0,0) -- (9,0) node[below right] {$\scriptstyle t$};
\draw [thick,->] (2,-1) -- (9,-1) node[below right] {$\scriptstyle t$};
\draw [thick,->] (4,-2) -- (9,-2) node[below right] {$\scriptstyle t$};

%
\foreach \x in {0,2,4}
\draw [very thick] (\x cm,3pt) -- (\x cm,-2pt);
\foreach \x in {6}
\draw (\x cm,3pt) -- (\x cm,-2pt);
\draw [thick](2, 4pt) -- (2,-1);
\draw [thick](4, 4pt) -- (4,-2);
\draw [very thick](2, -0.9) -- (2, -1.05);
\draw [very thick](4, -0.9) -- (4, -1.05);
\draw [very thick](4, -1.9) -- (4, -2.05);
\draw (7, -0.9) -- (7, -1.05);
\draw (0,0) node[above = 2pt] {$T^A_1 = 0$}
  node[above = 20pt]{$ BI_1(0)=2 $};
\draw (2,0) node[above = 2pt] {$T^A_2 = \tau_1^{(1)}$}node[above= 20pt]{$ BI_1(T^A_2)=3 $};
\draw (4.5,0) node[above = 2pt] {$T^A_3 = \tau_1^{(2)}+T_2^A$};
\draw (6,0) node[below = 2pt]{$\tau_2^{(1)}$};
\draw (8,0) node[below=3pt] {$ \cdots $} node[above=3pt] {$ \cdots $};
\draw (2,-1) node[below = 2pt]{$ BI_2(0)= 1$};
\draw (7,-1) node[below = 2pt]{$\tau_2^{(2)}+T^A_2$};
\draw (4, -2) node[below = 2pt]{$BI_1(T^A_3) = 3$}
                    node[below = 20pt]{$BI_2(T^A_3-T^A_2) = 2$}
                    node[below = 38pt]{$BI_3(0) = 1$};
\draw (8,-1) node[below=3pt] {$ \cdots $} node[above=3pt] {$ \cdots $};
\draw (8,-2) node[below=3pt] {$ \cdots $} node[above=3pt] {$ \cdots $};
\end{scope}
\end{tikzpicture}
\end{lrbox}
  \begin{figure}
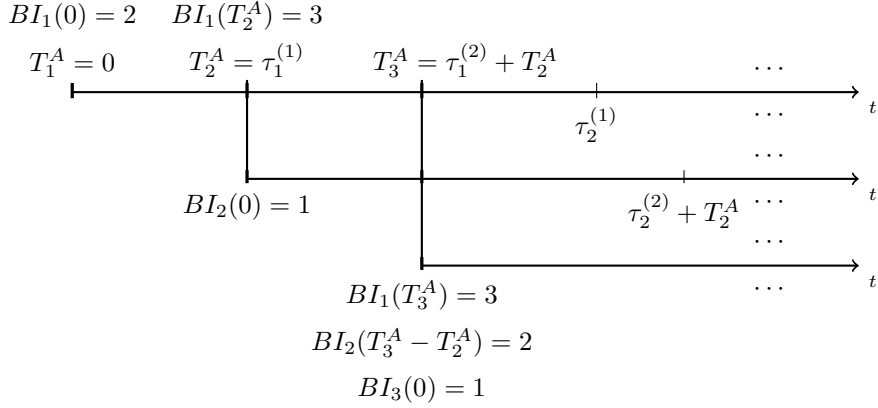

    \centering 
    \usebox{\mytikzpic} 
    \caption{Embedding procedure for Model A assuming $ \tau_1^{(2)}+T_2^A< \tau_2^{(1)}$.}\label{fig:embed}
\end{figure} 

\subsubsection{Model B and BI processes.}
In  Model B, \sid{a new node may} be born with a self loop
\sid{but} the B.I. process framework can \sid{still be used.}
We keep the independent sequence of $\{BI_i(t): t\ge 0\}_{i\ge 1}$ initialized as in \eqref{eq:initial},
as well as the definition of $\{\jtimei_k: k\ge 1\}$ for $i\ge 1$.

\sid{Set $T_0^B=T^B_1=0$} and start \emph{two} B.I. processes
$BI_1(\cdot)$ and $BI_2(\cdot)$ at $T^B_1$. 
At time $T^B_n$ with $n\ge 1$, there exist $n+1$ B.I. processes.
We define $T^B_{n+1}$ as the first time after $T^B_n$ that one of the processes
$\{BI_i(t-T^B_{i-1}): t\ge T^B_{i-1}\}_{1\le i\le n+1}$ jumps, i.e.
\beqq\label{eq:defTnb}
T^B_{n+1} := \text{min}\{T^B_{i-1}+\jtimei_k: k\ge 1, T^B_{i-1}+\jtimei_k>T^B_{n}, 1\le i\le n+1\},
\eeqq
and start a new, independent B.I. process $\{BI_{n+2}(t-T^B_{n+1})\}_{t\ge T^B_{n+1}}$ at $T^B_{n+1}$.

\subsubsection{Embedding.}
The following embedding theorem is similar to the one proved in
\cite{athreya:ghosh:sethuraman:2008} and summarizes how to embed in
the B.I. constructions.
\begin{Theorem}\label{thm:embed}  \sid{Fix $n\geq 1$.}\\
(a) For Model A, suppose 
$$\sid{\bfD^A(n):=\bigl(D_1^A(n),\dots,D_n^A(n) \bigr)}$$ \sid{is} the degree sequence of
nodes in the graph $G(n)$ 
and $\{T^A_n\}_{n\ge 1}$ is defined  as in \eqref{eq:defTn}.  For each fixed $n$, define
\begin{align*}
\widetilde{\bfD}^A(n) &:= (BI_1(T^A_n), BI_2(T^A_n-T^A_2), \ldots, BI_{n-1}(T^A_n-T^A_{n-1}), BI_n(0) ),
\end{align*}
and then $\bfD^A(n)$ and $\widetilde{\bfD}^A(n)$
have the same distribution in $\RR^n$.\\
(b) Analogously, for Model B, 
the degree sequence $$\bfD^B(n):=(D_1^B(n), \ldots, D_n^B(n))$$ and
 $$\widetilde{\bfD}^B(n) := (BI_1(T^B_n), BI_2(T^B_n), \ldots, BI_{n-1}(T^B_n-T^B_{n-2}), BI_n(T^B_n-T^B_{n-1}) )$$ 
 have the same distribution in $\RR^n$.
\end{Theorem}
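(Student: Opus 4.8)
The plan is to realize both $(\bfD^A(n))_{n\ge 1}$ and the embedded sequence $(\widetilde{\bfD}^A(n))_{n\ge 1}$ as Markov chains indexed by $n$ and to show they have the same initial law and the same one-step transition kernel; equality of the full laws, hence of each fixed-$n$ marginal in $\RR^n$, then follows by induction. The degree chain of Model A is Markov by construction, with $\bfD^A(1)=(2)$ and the transition \eqref{eq:prob1}. For the embedded side, the key point is that the vector of current values at the stopping time $T^A_n$ is exactly $\widetilde{\bfD}^A(n)$, and by the strong Markov property applied jointly to the independent B.I.\ processes the evolution after $T^A_n$ depends on the past only through this vector; thus $(\widetilde{\bfD}^A(n))_{n\ge 1}$ is itself Markov, and it remains to identify its transition kernel.

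Fix the configuration $d_i:=BI_i(T^A_n-T^A_i)$ at time $T^A_n$, where $d_n=BI_n(0)=1$. Because the holding rate of a B.I.\ process in state $d$ is $q_{d,d+1}=d+\delta$, the lack-of-memory property makes the residual time until process $i$ next jumps an independent $\text{Exp}(d_i+\delta)$ variable, so $T^A_{n+1}$ in \eqref{eq:defTn} is the minimum of these clocks. By the competing-exponentials identity, the index $I$ of the process that jumps, i.e.\ the node receiving the new edge, satisfies
$$
\mathbb{P}\bigl(I=i \mid \widetilde{\bfD}^A(n)\bigr)=\frac{d_i+\delta}{\sum_{j=1}^n(d_j+\delta)}.
$$
Since $G(n)$ has $n$ edges we have $\sum_{j=1}^n d_j=2n$, so the denominator is $(2+\delta)n$ and the right-hand side is precisely \eqref{eq:prob1}. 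When process $I$ jumps, its coordinate increases by one and the newly started $BI_{n+1}(0)=1$ is appended; this is exactly the map $\bfD^A(n)\mapsto\bfD^A(n+1)$ under which $v_{n+1}$ attaches to $v_I$ and is born with degree $1$. The two transition kernels therefore coincide, proving part (a).

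For Model B the same argument applies once the extra process is read as a \emph{candidate} node. At $T^B_n$ there are $n+1$ active processes: the first $n$ carry the degrees $d_1,\dots,d_n$, while the most recently started $BI_{n+1}$, begun at $T^B_n$ with value $1$, contributes a clock of rate $1+\delta$. Competing exponentials over all $n+1$ clocks give total rate $\sum_{j=1}^n(d_j+\delta)+(1+\delta)=(2+\delta)n+1+\delta$, matching the denominators in \eqref{eq:prob2}--\eqref{eq:prob3}. If one of the first $n$ clocks rings, that node gains an edge while the candidate, still at value $1$, becomes $v_{n+1}$ of degree $1$, reproducing \eqref{eq:prob2}; if the candidate clock rings, it advances to $2$ and becomes $v_{n+1}$ born with a self loop of degree $2$, reproducing \eqref{eq:prob3}. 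In either case the one-step transition agrees with Model B, and induction closes part (b).

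The main obstacle is the conditional lack-of-memory step underlying the Markov property of the embedded sequence: one must check that at the data-dependent time $T^A_n$, a stopping time for the joint filtration of the independent B.I.\ processes, the residual jump clocks are genuinely independent $\text{Exp}(d_i+\delta)$ variables. This is where the strong Markov property and the specific linear rates $q_{d,d+1}=d+\delta$ are used, and it is what forces the normalizing constant to collapse to $(2+\delta)n$ (respectively $(2+\delta)n+1+\delta$). Once this is established, identifying the jump law with \eqref{eq:prob1}--\eqref{eq:prob3} and reading off the degree update are routine, and the absence of ties (the clocks are a.s.\ distinct) guarantees that exactly one coordinate changes at each step.
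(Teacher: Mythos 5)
Your proposal is correct and takes essentially the same approach as the paper: both arguments view $\bfD^A(n)$ and $\widetilde{\bfD}^A(n)$ as Markov chains with the same initial state and identify their one-step transition kernels via the competing-exponentials calculation, using $\sum_j d_j = 2n$ to collapse the denominator to $(2+\delta)n$ (and to $(2+\delta)n+1+\delta$ for Model B, where your ``candidate node'' reading of the extra B.I.\ process is exactly the paper's treatment). Your explicit appeal to the strong Markov property at the stopping times $T^A_n$ merely spells out the lack-of-memory step that the paper asserts without comment.
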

\begin{proof}
By the construction of Model A, at each $T^A_n$, $n\ge 2$, we \sid{start} a new
B.I. process \sid{$BI_n(\cdot)$} with initial value equal to 1 and  
one of $BI_i$, $1\le i\le n-1$ also increases by 1. This makes the sum of the values of $BI_i$, $1\le i\le n$, increase by 2 so that
\[
\sum_{i=1}^n \left(BI_i(T^A_n-T^A_i)+\delta\right) = (2+\delta)n.
\] 
The rest is essentially the proof of
\citet[Theorem~2.1]{athreya:ghosh:sethuraman:2008} \sid{which we now outline.}

Both $\{ \bfD^A(n),n\geq 1\}$ and $\{\widetilde{\bfD}^A(n), n \geq 1\}$
are Markov on the state space $\cup_{n\geq 1}\R_+^n$ since
\begin{align*}
\bfD^A(n+1)=&\bigl(\bfD^A(n),1\bigr)  +\bigl(\be_{J_{n+1}}^{(n)},0\bigr),\\
\widetilde{\bfD}^A(n+1)=&\bigl(\widetilde{\bfD}^A(n),1\bigr)+\bigl(\be_{L_{n+1}}^{(n)},0\bigr),
\end{align*}
where for $n
\geq 1,$ $\be^{(n)}_j$ is a vector of length $n$ of $0$'s except for a
$1$ in the $j$-th \wtd{entry} and
$$P[J_{n+1}=j|\bfD^A(n)]=\frac{D^A_j(n)+\delta}{(2+\delta)n},\qquad 1
\leq j \leq n,$$
and $L_{n+1}$ records which B.I. process in $\{BI_i(t-T_i^A): t\ge T_i^A\}_{1\le i\le n}$ is the first
to have a new birth after $T^A_n$.

When $n =1$, 
\[
\widetilde{\bfD}^A(1) = BI_1(0) = 2 = D_1^A(1) = \bfD^A(1),
\]
\sid{so to prove equality in distribution for any $n$,}
it suffices to verify that the transition probability from
$\widetilde{\bfD}(n)$ to $\widetilde{\bfD}^A(n+1)$ is the same as that
from 
${\bfD}^A(n)$ to ${\bfD}^A(n+1)$.

According to the preferential attachment setup, we have
\begin{align}
\PP \Big({\bfD}^A(n+1) &=(d_1, d_2, \ldots, d_i+1, d_{i+1},\ldots, d_n, 1)\Big\vert {\bfD}^A(n) = (d_1, d_2, \ldots, d_n)\Big)\nonumber\\
&= \frac{d_i+\delta}{(2+\delta)n}, \quad 1\le i \le n.\label{transprob1}
\end{align}
At time $T_n^A$, there are $n$ B.I. processes and each of them has a population size of $BI_i(T_n^A-T_i^A)$, $1\le i\le n$.
Therefore, $T_{n+1}^A- T_n^A$ is the minimum of $n$ independent exponential random variables, $\{E^{(i)}_{n}\}_{1\le i\le n}$, with means 
\[
\left(BI_i(T_n^A-T_i^A)+\delta\right)^{-1},\quad 1\le i\le n,
\]
which gives for any $1\le i \le n,$
\begin{align*}
\PP&\Big(L_{n+1}=i\Big\vert \widetilde{\bfD}^A(n) = (d_1, d_2, \ldots, d_n)\Big)\\
=&\PP \Big(\widetilde{\bfD}^A(n+1) =(d_1, d_2, \ldots, d_i+1, d_{i+1},\ldots, d_n, 1)\Big\vert \widetilde{\bfD}^A(n) 
= (d_1, \ldots, d_n)\Big)\\
=& \PP\Big(E^{(i)}_{n}<\bigwedge_{j=1, j\neq i}^{n} E^{(j)}_{n}\Big\vert\widetilde{\bfD}^A(n) = (d_1, d_2, \ldots, d_n)\Big)\\
=& \frac{BI_i(T_n^A-T_i^A)+\delta}{\sum_{i=1}^n \left(BI_i(T^A_n-T^A_i)+\delta\right)}=
\frac{d_i+\delta}{(2+\delta)n}.
\end{align*}
This agrees with the transition probability in \eqref{transprob1}, thus completing the proof for Model A.

For Model B, the proof follows in a similar way except that for each $n\ge 1$, $T_{n+1}^B- T_n^B$ is 
the minimum of $n+1$ independent exponential random variables with means 
\[
\left(BI_i(T_n^B-T_{i-1}^B)+\delta\right)^{-1},\quad 1\le i\le n+1,
\]
so that for $1\le i\le n$,
\begin{align*}
\PP \Big(\widetilde{\bfD}^B(n+1) &=(d_1, d_2, \ldots, d_i+1, d_{i+1},\ldots, d_n, 1)\Big\vert \widetilde{\bfD}^B(n) 
= (d_1, d_2, \ldots, d_n)\Big)\\
&= \frac{BI_i(T_n^B-T_{i-1}^B)+\delta}{\sum_{i=1}^{n+1} \left(BI_i(T_n^B-T_{i-1}^B)+\delta\right)}=
\frac{d_i+\delta}{(2+\delta)n+1+\delta},\\
\intertext{and}
\PP\Big(\widetilde{\bfD}^B(n+1) &= (d_1, d_2, \ldots, d_n, 2)\Big\vert \widetilde{\bfD}^B(n) = (d_1, d_2, \ldots, d_n)\Big)\\
&= \frac{1+\delta}{(2+\delta)n + 1+\delta}.
\end{align*}
\end{proof}

\begin{Remark}{\rm

This B.I. process construction can also be generalized for other choices of the preferential attachment functions $f$.
For example, its applications to the super- and sub-linear preferential attachment models are studied in \cite{athreya:2007}.}
\end{Remark}

\subsection{Asymptotic properties.}\label{subsec:asy}
One important reason to use the embedding technique specified in Section~\ref{subsec:embed} is that asymptotic behavior of the degree growth in a preferential attachment model can be characterized explicitly.
These asymptotic properties then help us derive weak convergence of the empirical measure, which is analogous to \eqref{eq:bnX} in the iid case.

\subsubsection{Branching times.}
We first consider the asymptotic behavior of the branching times $\{T^A_n\}_{n\ge 1}$ and $\{T^B_n\}_{n\ge 1}$, which are defined in Section~\ref{subsec:embed}.
\begin{Proposition}\label{prop:asy_Tn}
For $\{T^A_n\}_{n\ge 1}$ and $\{T^B_n\}_{n\ge 1}$ defined in \eqref{eq:defTn} and \eqref{eq:defTnb} respectively, we have
\begin{align}
\frac{n}{e^{(2+\delta)T^A_n}}\convas W_A,\qquad W_A &\sim \text{Exp}\left(1\right);\label{eq:TAn_asy}\\
\intertext{ and }
\frac{n}{e^{(2+\delta)T^B_n}}\convas W_B,\qquad W_B &\sim \text{Gamma}\left(\frac{3+2\delta}{1+\delta}, 1\right).\label{eq:TBn_asy}
\end{align}
\end{Proposition}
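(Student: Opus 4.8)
The plan is to recognize that both branching--time sequences are the jump times of the \emph{node-count process}, which is itself a one-dimensional continuous-time Markov chain of exactly the type treated in Section~\ref{sec:prelim}. Write $M^A(t)$ (resp.\ $M^B(t)$) for the number of nodes present at time $t$ in the embedded construction, so that $M^A(T^A_n)=n$ and $M^B(T^B_n)=n$. The crucial point is that the \emph{total} jump rate out of a configuration depends only on the current number of nodes. Indeed, by the degree-sum invariant $\sum_i D_i(n)=2n$, at time $T^A_n$ the $n$ active processes carry total rate $\sum_{i=1}^n\bigl(BI_i+\delta\bigr)=(2+\delta)n$, while at time $T^B_n$ the $n+1$ active processes (the $n$ node processes plus the pending process of value $1$) carry total rate $(2+\delta)n+(1+\delta)$. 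Hence $M^A$ is a linear birth process with rate $q_{n,n+1}=(2+\delta)n$ started from $M^A(0)=1$, and $M^B$ is a birth process with immigration with rate $q_{n,n+1}=(2+\delta)n+(1+\delta)$ started from $M^B(0)=1$; in the notation of Section~\ref{sec:prelim} both have birth rate $\lambda=2+\delta$, and $M^B$ has immigration parameter $\theta=1+\delta$.

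Granting this identification, the $t$-asymptotics follow from the results already recorded. For Model A, \eqref{e:PBconv} gives $e^{-(2+\delta)t}M^A(t)\convas W_A$ with $W_A\sim\mathrm{Exp}(1)$. For Model B I would first note that a birth--immigration process started from one individual decomposes in law as the independent sum of the Yule family generated by that initial individual and a birth--immigration process started from $0$ (adding the two rates reproduces $\lambda n+\theta$): the former contributes, via \eqref{e:PBconv}, a limit $\sim\mathrm{Gamma}(1,1)$, and the latter contributes, via Theorem~\ref{thm:tavare}, a limit $\sim\mathrm{Gamma}(\theta/\lambda,1)$. Since both limits share the same exponential scale and are independent, $e^{-(2+\delta)t}M^B(t)\convas W_B$ with $W_B$ Gamma distributed, its shape being the sum of the two shapes. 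This is the step that pins down the precise shape parameter, and where I would be most careful to match the constant in \eqref{eq:TBn_asy}.

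Finally I would invert from $t$-asymptotics to $n$-asymptotics. Writing $E_k:=T_{k+1}-T_k\sim\mathrm{Exp}(q_{k,k+1})$ independently, we have $\sum_k \E[E_k]=\sum_k q_{k,k+1}^{-1}=\infty$, so the independent nonnegative increments force $T_n\to\infty$ almost surely. The node-count process is nondecreasing and right-continuous with $M(T_n)=n$, so evaluating the almost-sure limit $e^{-\lambda t}M(t)\to W$ along the (almost surely divergent) subsequence $t=T_n$ gives
\[
\frac{n}{e^{(2+\delta)T_n}}=e^{-(2+\delta)T_n}M(T_n)\convas W,
\]
which is \eqref{eq:TAn_asy} for Model A and \eqref{eq:TBn_asy} for Model B. The inversion is legitimate precisely because $W\in(0,\infty)$ almost surely, so no mass escapes to $0$ or $\infty$.

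The main obstacle I anticipate is the justification of the two structural reductions rather than any single computation. First, one must check that the node-count process is genuinely Markov with the stated rate; this rests entirely on the degree-sum invariant collapsing the dependence on the full configuration down to dependence on $n$ alone, even though individual nodes reproduce at the non-constant rate $D_i+\delta$. Second, for Model B one must correctly account for the nonzero (and degree-$2$) initial node when combining Theorem~\ref{thm:tavare} with the Yule limit \eqref{e:PBconv}, since it is exactly this initial contribution that shifts the Gamma shape away from Tavar\'e's bare $\theta/\lambda$.
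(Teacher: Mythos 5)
Your proposal is correct and follows essentially the same route as the paper. The paper's proof introduces exactly your node-count processes (there called $N_A(\cdot)$ and $N_B(\cdot)$), uses the degree-sum invariant to identify the inter-jump times as independent exponentials with rates $(2+\delta)n$ and $(2+\delta)n+1+\delta$ (displays \eqref{eq:NA}--\eqref{eq:NB}), and then invokes \eqref{e:PBconv} for Model A and Theorem~\ref{thm:tavare} for Model B. The only place you genuinely diverge is the treatment of the nonzero initial state in Model B: the paper shifts, setting $N_B'(t)=N_B(t)-1$, which is a B.I.\ process started from $0$ with $\lambda=2+\delta$, $\theta=3+2\delta$, and quotes Tavar\'e once; you instead decompose the process started from $1$ into an independent sum of a Yule process started from $1$ and a B.I.\ process started from $0$ with $\theta=1+\delta$, and add the independent Gamma limits. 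The two devices are equivalent and equally rigorous. Your explicit check of non-explosion ($T_n\to\infty$ a.s.) before evaluating the a.s.\ limit along $t=T_n$ is a step the paper passes over silently; including it is correct and harmless.

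The constant you flagged deserves the worry, but the discrepancy is in the printed statement, not in your argument. Carrying out your recipe gives shape $1+\theta/\lambda=1+\frac{1+\delta}{2+\delta}=\frac{3+2\delta}{2+\delta}$, and the paper's own shift gives the same value, $\theta'/\lambda=\frac{3+2\delta}{2+\delta}$ with $\theta'=3+2\delta$; neither equals the shape $\frac{3+2\delta}{1+\delta}$ displayed in \eqref{eq:TBn_asy}, whose denominator should read $2+\delta$. A sanity check confirms your value: with $\lambda=2+\delta$, $\theta=1+\delta$ and $N_B(0)=1$,
\[
e^{-(2+\delta)t}\,\EE\bigl[N_B(t)\bigr]
=\frac{3+2\delta}{2+\delta}-\frac{1+\delta}{2+\delta}\,e^{-(2+\delta)t}
\longrightarrow \frac{3+2\delta}{2+\delta},
\]
and since $e^{-(2+\delta)t}\bigl(N_B(t)+\tfrac{1+\delta}{2+\delta}\bigr)$ is a positive, $L^2$-bounded martingale, the a.s.\ limit $W_B$ has mean $\frac{3+2\delta}{2+\delta}$; a $\text{Gamma}(a,1)$ random variable has mean $a$, so $a=\frac{3+2\delta}{2+\delta}$ (for instance, at $\delta=0$ the mean is $3/2$, not $3$). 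So where your conclusion conflicts with the displayed constant in Proposition~\ref{prop:asy_Tn}, the display is a typo and your proof is the correct one.
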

\begin{proof}
Define two counting processes
\[
N_A(t) := \frac{1}{2}\sum_{i=1}^\infty BI_i(t-T^A_{i})\ind_{\{t\ge T^A_i\}}
\]
in Model A, and 
\[
N_B(t) := \frac{1}{2}\sum_{i=1}^\infty BI_i(t-T^B_{i-1})\ind_{\{t\ge T^B_i\}}
\]
in Model B.
In either case, we have 
\[
N_l(t)\ind_{\big\{t\in[T^l_n, T^l_{n+1})\big\}} =  n, \quad l= A,B.
\]
In other words, $\{T^l_n\}_{n\ge 1}$ are the jump times of the counting process $N_l(\cdot)$, for $l=A,B$, with the following structure
\begin{align}
\{T^A_{n+1}- T^A_{n}: n\ge 1\} &\stackrel{d}{=} \left\{\frac{A_i}{(2+\delta)i}, i\ge 1\right\},\label{eq:NA}\\
\{T^B_{n+1}- T^B_{n}: n\ge 1\} &\stackrel{d}{=} \left\{\frac{B_i}{(2+\delta)i+1+\delta}, i\ge 1\right\},\label{eq:NB}
\end{align}
where $\{A_i: i\ge 1\}$ and $\{B_i: i\ge 1\}$ are iid unit exponential random variables.

From \eqref{eq:NA}, we see that $N_A(\cdot)$ is a pure birth process with $N_A(0)=1$ and transition rate 
\[
q^A_{i,i+1} = (2+\delta)i ,\quad i\ge 1.
\]
\sid{Replacing $t$ with $T_n^A$} in  \eqref{e:PBconv}
 gives \eqref{eq:TAn_asy}. 
By \eqref{eq:NB}, $N_B(\cdot)$ is a B.I. process with $N_B(0)=1$ and
transition rate $q^B_{i,i+1}=(2+\delta)i+1+\delta$, $i\ge 1$. 
In order to apply Theorem~\ref{thm:tavare} which assumes $N_B(0)$, we
define $N_B'(t):= N_B(t)-1$ for all $t\ge 1$. Then $N'_B$ is a
B.I. process with $N_B'(0)=0$ and transition rate  
$(2+\delta)i+3+2\delta$, for $i\ge 0$. 
Therefore, \eqref{eq:TBn_asy} follows directly from Theorem~\ref{thm:tavare}.
\end{proof}

\subsubsection{Convergence of the measure.}
Using embedding techniques, Theorem~\ref{thm:tail_meas} gives the convergence of the empirical measure, which draws an analogy to \eqref{eq:bnX} in the iid  case.
\begin{Theorem}\label{thm:tail_meas}
Suppose that 
\begin{enumerate}
\item[(1)]$\{T^l_i: i\ge 1\}$, $l=A,B$ are distributed as in \eqref{eq:NA} and \eqref{eq:NB}.
\item[(2)]$W_l$, $l=A,B$ are limit random variables as given in \eqref{eq:TAn_asy} and \eqref{eq:TBn_asy}.
\item[(3)]$\{\sigma_i\}_{i\ge 1}$ \wtd{is} a sequence of independent Gamma
  random variables specified in \eqref{eq:BI_asy} and
  \eqref{eq:sigma} below. 
\end{enumerate}
Then in $M_p((0,\infty])$, we have for $\delta\ge 0$,
\begin{subequations}\label{eq:meas_asy}
\begin{align}
\sum_{i=1}^n &\epsilon_{D^A_i(n)/n^{1/(2+\delta)}}(\cdot)\Rightarrow
               \sum_{i=1}^\infty \epsilon_{\sigma_i
               e^{-T^A_i}/W_A^{1/(2+\delta)}}(\cdot),\label{eq:meas_asyA}\\ 
\sum_{i=1}^n &\epsilon_{D^B_i(n)/n^{1/(2+\delta)}}(\cdot)\Rightarrow \sum_{i=1}^\infty \epsilon_{\sigma_i e^{-T^B_{i-1}}/W_B^{1/(2+\delta)}}(\cdot).\label{eq:meas_asyB}
\end{align}
\end{subequations}
\end{Theorem}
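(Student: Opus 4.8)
The plan is to reduce the statement to the embedded B.I. processes and then combine a finite-dimensional almost-sure convergence with a uniform tail estimate. By Theorem~\ref{thm:embed}, for each fixed $n$ the vector $\bfD^A(n)$ has the same law as $(BI_1(T^A_n), BI_2(T^A_n - T^A_2), \dots, BI_n(0))$, and similarly for Model B after the index shift $T^A_i \mapsto T^B_{i-1}$; hence the empirical measures on the two sides of \eqref{eq:meas_asyA} (resp.\ \eqref{eq:meas_asyB}) have the same distribution for every $n$, and it suffices to prove the weak convergence for the embedded measures, all built on the single probability space carrying the independent processes $\{BI_i\}$. I will give the argument for Model A; Model B is identical after replacing $T^A_i$ by $T^B_{i-1}$ and $W_A$ by $W_B$. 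Recall that $M_p((0,\infty])$ carries the vague topology, whose compacta are the sets $[\eta,\infty]$ with $\eta>0$, so it is enough to control points bounded away from $0$.

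The first step is pointwise convergence. Fix $i$. Since $T^A_n\uparrow\infty$ a.s., Theorem~\ref{thm:tavare} applied to $BI_i$ (whose transition rate $q_{j,j+1}=j+\delta$ corresponds to birth parameter $\lambda=1$ and immigration parameter $\theta=\delta\ge 0$, which is where the hypothesis $\delta\ge0$ enters) gives $e^{-(T^A_n - T^A_i)}BI_i(T^A_n - T^A_i) \convas \sigma_i$. By Proposition~\ref{prop:asy_Tn}, $e^{(2+\delta)T^A_n}/n \convas 1/W_A$, so $e^{T^A_n}/n^{1/(2+\delta)} \convas W_A^{-1/(2+\delta)}$. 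Writing
$$\frac{BI_i(T^A_n - T^A_i)}{n^{1/(2+\delta)}} = \Big(e^{-(T^A_n-T^A_i)}BI_i(T^A_n-T^A_i)\Big)\cdot \frac{e^{T^A_n}}{n^{1/(2+\delta)}}\cdot e^{-T^A_i}$$
and multiplying these a.s.\ limits yields $BI_i(T^A_n-T^A_i)/n^{1/(2+\delta)} \convas \sigma_i e^{-T^A_i}/W_A^{1/(2+\delta)}$. All limits live on one space and are compatible, so for each fixed $K$ the convergence holds jointly in $1\le i\le K$; consequently the truncated measures $\sum_{i=1}^K \epsilon_{BI_i(T^A_n-T^A_i)/n^{1/(2+\delta)}}$ converge weakly to $\sum_{i=1}^K \epsilon_{\sigma_i e^{-T^A_i}/W_A^{1/(2+\delta)}}$ as $n\to\infty$, and the latter increases to the full right-hand side of \eqref{eq:meas_asyA} as $K\to\infty$.

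To upgrade truncated convergence to the full statement I will invoke the standard converging-together lemma for point processes: it suffices to show that, for every $\eta>0$ not a.s.\ charged by the limit,
$$\lim_{K\to\infty}\ \limsup_{n\to\infty}\ P\Big[\sum_{i=K+1}^{n}\ind_{\{BI_i(T^A_n - T^A_i)\,>\,\eta\, n^{1/(2+\delta)}\}}\ge 1\Big]=0.$$
Heuristically this holds because a late-born node $i$ has small degree: Proposition~\ref{prop:asy_Tn} gives $e^{-T^A_i}\approx (W_A/i)^{1/(2+\delta)}$, so the candidate limit point is of order $\sigma_i\, i^{-1/(2+\delta)}$, and since the $\sigma_i$ have Gamma (hence exponential) tails while $i^{1/(2+\delta)}$ grows polynomially, $\sum_i P[\sigma_i > \eta' i^{1/(2+\delta)}] < \infty$; a Borel--Cantelli argument then shows both that the limit measure is a.s.\ Radon (finite on $[\eta,\infty]$) and that above $\eta$ no mass accumulates beyond finitely many indices.

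I expect the genuine difficulty to be making this last estimate uniform in $n$, since the threshold event involves the random, mutually dependent quantities $T^A_n$, $T^A_i$ and $W_A$ together with $BI_i$ evaluated at a \emph{random} time. The clean route is to dominate $BI_i(T^A_n - T^A_i)/n^{1/(2+\delta)}$ by a quantity not depending on $n$, e.g.\ to bound it in terms of $\bar\sigma_i := \sup_{t\ge 0} e^{-t}BI_i(t)$ and a uniformly-bounded-in-probability version of $e^{T^A_n}/n^{1/(2+\delta)}$, and then to show $\sum_{i>K} P[\bar\sigma_i\, e^{-T^A_i} W_A^{-1/(2+\delta)} > \eta]$ is small; this is where control of the maximal functional $\bar\sigma_i$ (via Doob's inequality for the submartingale $e^{-t}BI_i(t)$, whose submartingale property again needs $\delta\ge0$) must be combined with control of the fluctuations of the branching times. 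Once the tail bound is established, the converging-together lemma yields \eqref{eq:meas_asyA}, and the identical argument with $T^A_i$ replaced by $T^B_{i-1}$ and $W_A$ by $W_B$ yields \eqref{eq:meas_asyB}.
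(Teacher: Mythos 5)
Your overall framework---reduce to the embedded B.I.\ processes via Theorem~\ref{thm:embed}, prove joint a.s.\ convergence of any fixed finite collection of normalized degrees, then upgrade by a converging-together argument---is a legitimate alternative to the paper's route (the paper instead verifies the two conditions of Kallenberg's theorem: convergence of mean measures on intervals $(y,\infty]$, and convergence of void probabilities, the latter via the max-convergence in Lemma~\ref{lemma:degseq}(ii)). But your proposal has a genuine gap precisely at the step you yourself flag: the uniform-in-$n$ tail estimate
\[
\lim_{K\to\infty}\limsup_{n\to\infty}\PP\Bigl[\exists\, i>K:\ BI_i(T^A_n-T^A_i)>\eta\, n^{1/(2+\delta)}\Bigr]=0
\]
is never proved, only sketched (``the clean route is to dominate\dots must be combined with control of the fluctuations of the branching times''). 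This is not a routine verification one can wave at: the dominating quantities $\bar\sigma_i$, $e^{-T^A_i}$ and $\sup_m e^{T^A_m}/m^{1/(2+\delta)}$ are mutually dependent, and you would need a quantitative, summable-in-$i$ tail bound for $\bar\sigma_i e^{-T^A_i}$ that holds uniformly, which requires (for instance) uniform moment bounds on $i\,e^{-(2+\delta)T^A_i}$ in addition to Doob's inequality. This missing estimate is exactly the hard core of the theorem; the paper supplies it through the Chebyshev bound \eqref{eq:tail_meansum} combined with the moment domination $\EE[((D^A_i(n)+\delta)/n^{1/(2+\delta)})^k]\le \EE[(\sigma_i e^{-T^A_i}/W_A^{1/(2+\delta)})^k]$ (valid for $\delta\ge 0$, citing \citet[Equations~(8.7.26), (8.7.22)]{vanderHofstad:2017}), whose sum over $i>M$ vanishes as $M\to\infty$ uniformly in $n$ because $k/(2+\delta)>1$. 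That same bound would in fact plug directly into your converging-together condition, so your skeleton is completable---but as written the proof is not there.

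A second, smaller but real error: you identify $BI_i$ (rate $q_{j,j+1}=j+\delta$, $BI_i(0)=1$) as having immigration parameter $\theta=\delta$, and claim this is where $\delta\ge 0$ enters. Since the process starts at $1$, one must recenter ($BI_i'(t)=BI_i(t)-1$), which gives rates $j+1+\delta$, i.e.\ $\theta=1+\delta>0$ for every $\delta>-1$; the resulting limit is $\sigma_i\sim\text{Gamma}(1+\delta,1)$ as in \eqref{eq:sigma}, not $\text{Gamma}(\delta,1)$. Your identification yields the wrong law for $\sigma_i$ and breaks down entirely at $\delta=0$ (where Theorem~\ref{thm:tavare} with $\theta=0$ is degenerate), even though $\delta=0$ is allowed by the theorem. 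The hypothesis $\delta\ge0$ is actually needed where the paper uses it: in the uniform moment bound $D^A_i(n)\le D^A_i(n)+\delta$ (and, in your route, for the submartingale property of $e^{-t}BI_i(t)$)---not for the pointwise convergence, which holds for all $\delta>-1$.
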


\begin{Remark}\label{rem:applic} {\rm
From \eqref{eq:meas_asyA} we get for any fixed $k\geq 1$, that in $\R_+^k$,
\begin{equation}\label{e:bigly}
\Bigl( \frac{D^A_{(1)}(n)}{n^{1/(2+\delta)}}, \dots
\frac{D^A_{(k)}(n)}{n^{1/(2+\delta)}}\Bigr) \Rightarrow
W_A^{-1/(2+\delta)}
\Bigl( 
(\sigma_\cdot
               e^{-T^A_\cdot} )_{(1)},\dots, (\sigma_\cdot
               e^{-T^A_\cdot} )_{(k)} \Bigr),
\end{equation}
where a subscript inside parentheses indicates ordering so that
$D_{(1)} ^A(n) \geq  \dots \geq D^A_{(k)}$ and the limit on the right
side of \eqref{e:bigly} represents the ordered $k$ largest points from  the
right side of \eqref{eq:meas_asyA}.
A similar result for Model B follows from \eqref{eq:meas_asyB}.
}
\end{Remark}

To prove Theorem~\ref{thm:tail_meas}, we first need to show the following lemma, which gives the
asymptotic limit of the degree sequence under the B.I. process framework.
\begin{Lemma}\label{lemma:degseq}
Suppose that 
\begin{enumerate}
\item[(1)]$\{T^l_i: i\ge 1\}$, $l=A,B$ are distributed as in \eqref{eq:NA} and \eqref{eq:NB}.
\item[(2)]$W_l$, $l=A,B$ are limit random variables as given in \eqref{eq:TAn_asy} and \eqref{eq:TBn_asy}.
\end{enumerate}
Then we have the following convergence results \sid{pertinent to} the degree sequence $\{D_i^l(n):1\le i\le n\}$, for $l=A,B$:
\begin{enumerate}
\item[(i)] 
For each $i\ge 1$,
\begin{subequations}\label{eq:BI_asy}
\begin{align}
\frac{BI_i(T^A_n - T^A_i)}{e^{T^A_n}} &\convas \sigma_i e^{-T^A_i}, \label{eq:BI_asyA}\\
\frac{BI_i(T^B_n - T^B_{i-1})}{e^{T^B_n}} &\convas \sigma_i e^{-T^B_{i-1}},\label{eq:BI_asyB}
\end{align}
\end{subequations}
where $\{\sigma_i\}_{i\ge 1}$ are a sequence of independent Gamma random variables with 
\beqq\label{eq:sigma}
\sigma_1\sim \text{Gamma}(2+\delta,1),\quad\text{and} \quad \sigma_i\sim \text{Gamma}(1+\delta,1),\quad i\ge 2.
\eeqq
Furthermore, \sid{for $i\ge
1$,} $ \sigma_i\sid{\independent}  e^{-T^A_i}$ in Model A  and $\sigma_i\independent  e^{-T^B_{i-1}}$ in Model B. 
\item[(ii)] 
For $\delta>-1$,
\begin{subequations}\label{eq:max_asy}
\begin{align}
\max_{i\ge 1}\, &\frac{D^A_i(n)}{n^{1/(2+\delta)}}\,\sid{\convp}\,W_A^{-1/(2+\delta)}\max_{i\ge 1} \sigma_i e^{-T^A_i},\label{eq:max_asyA}\\
\max_{i\ge 1}\, &\frac{D^B_i(n)}{n^{1/(2+\delta)}}\,\sid{\convp}\, W_B^{-1/(2+\delta)}\max_{i\ge 1} \sigma_i e^{-T^B_{i-1}},\label{eq:max_asyB}
\end{align}
\end{subequations}
where we set $T^B_0:=0$ and $D^l_i(n):=0$ for all $i\ge n+1$, $l=A,B$.
\end{enumerate}
\end{Lemma}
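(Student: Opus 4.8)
The plan is to prove (i) first, since (ii) rests on it together with the embedding of Theorem~\ref{thm:embed} and the branching-time asymptotics of Proposition~\ref{prop:asy_Tn}. For (i) I would begin with the single-process statement that $e^{-s}BI_i(s)\convas\sigma_i$ as $s\to\infty$, identifying the law of $\sigma_i$ through the shot-noise structure \eqref{eq:defBI} of the B.I. process: each of the $BI_i(0)$ initial individuals founds an independent linear birth process to which \eqref{e:PBconv} applies, contributing an independent $\text{Exp}(1)=\text{Gamma}(1,1)$ summand, while the immigration stream of rate $\theta=\delta$ contributes, by Theorem~\ref{thm:tavare}, an independent $\text{Gamma}(\delta,1)$ summand. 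Since independent Gammas with common unit scale add shape parameters, this gives $\sigma_1\sim\text{Gamma}(2+\delta,1)$ (as $BI_1(0)=2$) and $\sigma_i\sim\text{Gamma}(1+\delta,1)$ for $i\ge2$ (as $BI_i(0)=1$), matching \eqref{eq:sigma}; for $-1<\delta<0$, where the immigration picture is no longer literal, the same Gamma limit follows from almost sure convergence of the nonnegative martingale used below together with a Laplace-transform computation. I would then reach \eqref{eq:BI_asyA} by the pathwise factorization
\[
\frac{BI_i(T^A_n-T^A_i)}{e^{T^A_n}}=\frac{BI_i(T^A_n-T^A_i)}{e^{T^A_n-T^A_i}}\,e^{-T^A_i},
\]
observing that $T^A_n-T^A_i\to\infty$ a.s.\ by \eqref{eq:TAn_asy}, so the first factor tends a.s.\ to $\sigma_i$ while the second is a fixed random variable; \eqref{eq:BI_asyB} is identical with $T^B_{i-1}$ replacing $T^A_i$ and \eqref{eq:TBn_asy} supplying $T^B_n\to\infty$. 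The independence $\sigma_i\independent e^{-T^A_i}$ (resp.\ $\sigma_i\independent e^{-T^B_{i-1}}$) is then immediate: $\sigma_i$ is a functional of $BI_i$ alone, whereas the start time $T^A_i$ (resp.\ $T^B_{i-1}$) of $BI_i$ is determined by $\{BI_j:j<i\}$, and these processes are mutually independent.

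For (ii) I would first use Theorem~\ref{thm:embed} to replace $(D^A_i(n))_i$ by $(BI_i(T^A_n-T^A_i))_i$ in distribution and rewrite, via Proposition~\ref{prop:asy_Tn},
\[
\frac{D^A_i(n)}{n^{1/(2+\delta)}}=\bigl(n\,e^{-(2+\delta)T^A_n}\bigr)^{-1/(2+\delta)}\,\frac{BI_i(T^A_n-T^A_i)}{e^{T^A_n}},
\]
where the prefactor converges a.s.\ to $W_A^{-1/(2+\delta)}$. The lower bound is then soft: for each fixed $m$ the restriction $\max_{1\le i\le n}\ge\max_{1\le i\le m}$ together with (i) gives $\liminf_n\max_i D^A_i(n)/n^{1/(2+\delta)}\ge W_A^{-1/(2+\delta)}\max_{1\le i\le m}\sigma_i e^{-T^A_i}$ a.s., and letting $m\to\infty$ produces the full maximum on the right of \eqref{eq:max_asyA}.

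The hard part, and the main obstacle, is the matching upper bound, namely ruling out that late-born nodes dominate, i.e.\ showing $\lim_{m\to\infty}\limsup_n\sup_{m<i\le n}D^A_i(n)/n^{1/(2+\delta)}=0$. My plan is to control $M_i:=\sup_{t\ge0}e^{-t}\bigl(BI_i(t)+\delta\bigr)$, the running maximum of the nonnegative martingale $e^{-t}(BI_i(t)+\delta)$ obtained from the Malthusian normalization of the mean (this is where $\delta>-1$ enters, guaranteeing positivity). A moment computation for the B.I.\ process bounding $\sup_t\E[(e^{-t}BI_i(t))^p]<\infty$ for some $p>2+\delta$, followed by Doob's $L^p$ maximal inequality, yields $\E[M_i^p]<\infty$ with $p>2+\delta$; since the $M_i$ are iid for $i\ge2$, Markov's inequality and Borel--Cantelli give $i^{-1/(2+\delta)}M_i\to0$ a.s. Combined with the bounded sequence $i^{1/(2+\delta)}e^{-T^A_i}\to W_A^{1/(2+\delta)}$, this forces $\sup_{i>m}e^{-T^A_i}M_i\to0$ a.s.\ as $m\to\infty$; and since $BI_i(T^A_n-T^A_i)/e^{T^A_n-T^A_i}\le M_i+|\delta|$ for every $n$, the tail contribution vanishes uniformly in $n$, legitimizing the interchange of $\max_i$ and $\lim_n$ and completing \eqref{eq:max_asyA}. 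Model~B follows verbatim using \eqref{eq:TBn_asy} and $T^B_{i-1}$, giving \eqref{eq:max_asyB}. Securing the moment exponent $p>2+\delta$ for the supremum of the normalized B.I.\ process is the crux; everything else is routine.
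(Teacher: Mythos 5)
Your proposal is correct, but it departs from the paper's proof in both halves, and the comparison is instructive. For part (i), the paper does not decompose $BI_i$ into Yule offspring processes plus an immigration stream; instead it subtracts the initial value, setting $BI_1'(t)=BI_1(t)-2$ and $BI_i'(t)=BI_i(t)-1$, so that each $BI_i'$ is a genuine B.I.\ process started from $0$ with immigration parameter $2+\delta$ (resp.\ $1+\delta$), and then quotes Theorem~\ref{thm:tavare} directly to get the Gamma limits in \eqref{eq:sigma}. This shift trick is worth adopting: it handles every $\delta>-1$ in one stroke, whereas your superposition argument is literal only for $\delta\ge 0$ and forces you into the separately sketched (and not carried out) martingale-plus-Laplace-transform detour for $-1<\delta<0$; that detour is feasible but is exactly the case the paper's rewriting makes unnecessary. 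For part (ii), the situation is reversed: the paper reduces to the $BI$ representation via Theorem~\ref{thm:embed}, exactly as you do, but then simply cites \citet[Theorem~1.1(iii)]{athreya:ghosh:sethuraman:2008} for the interchange of $\max_{i\ge 1}$ and $n\to\infty$, i.e.\ for ruling out domination by late-born nodes. Your argument---Doob's $L^p$ maximal inequality applied to the nonnegative martingale $e^{-t}\bigl(BI_i(t)+\delta\bigr)$, uniform-in-$t$ moment bounds giving $\E[M_i^p]<\infty$ for some $p>2+\delta$, Borel--Cantelli to get $i^{-1/(2+\delta)}M_i\to 0$ a.s., and the bound $\sup_{i>m}e^{-T^A_i}(M_i+|\delta|)\to 0$ uniform in $n$---is a self-contained replacement for that citation, and it is sound: the moment bounds hold for all $p$ since the normalized B.I.\ process has moments converging to those of a Gamma law, so the exponent $p>2+\delta$ you flag as the crux is available. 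In short, you buy independence from the external reference in (ii) at the cost of a case split in (i) that the paper's initial-value shift avoids; combining the paper's proof of (i) with your proof of (ii) would give the cleanest fully self-contained argument.
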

\begin{proof}
(i) For the B.I. processes $\{BI_i(\cdot)\}_{i\ge 1}$ defined here, all of them have initial values greater than 0.
Hence, in order to apply the asymptotic results in \cite{tavare:1987}, we need to modify them such that they all start with 0.
To do this, set for all $t\ge 0$,
\[
BI'_1(t) := BI_1(t) - 2, \qquad BI'_i(t) := BI_i(t) - 1, \quad i\ge 2,
\]
and we have $BI'_i(0) = 0$ for all $i\ge 0$. The transition rate needs to be changed accordingly, i.e. the process $BI'_1(\cdot)$
has transition rate $q'_{j,j+1}=j+2+\delta$ and that for $BI'_i(\cdot)$, $i\ge 2$, becomes $ j+1+\delta$, $j\ge 0$.

Throughout the rest of the proof of Lemma~\ref{lemma:degseq}, we only show the case for Model A and the result for Model B follows from the same argument.
Now applying Theorem~\ref{thm:tavare} gives that as $t\to\infty$,
\begin{align*}
\frac{BI_i(t-T^A_{i})}{e^{t-T^A_{i}}} & \stackrel{\text{a.s.}}{\longrightarrow} \sigma_i ,\quad i\ge 1,
\end{align*}
where $\{\sigma_i\}_{i\ge 1}$ are independent Gamma random variables with 
\[
\sigma_1\sim \text{Gamma}(2+\delta,1)\quad\text{and} \quad \sigma_i\sim \text{Gamma}(1+\delta,1),\quad i\ge 2.
\]
Thus as $n\to\infty$,
\begin{align*}
\frac{BI_i(T^l_n-T^A_{i})}{e^{T^l_n-T^A_{i}}} & \stackrel{\text{a.s.}}{\longrightarrow} \sigma_i,\quad i\ge 1,
\end{align*}
which gives \eqref{eq:BI_asyA}.

For $i\ge 2$, the independence of $\sigma_i$ and $T^A_i$ follows from the construction and
this completes the proof of (i).\\
(ii) Combining \eqref{eq:BI_asyA} with \eqref{eq:TAn_asy}, we have for fixed $1\le i\le n$,
\[
\frac{BI_i(T^A_n-T^A_i)}{n^{1/(2+\delta)}}\convas \frac{\sigma_i e^{-T^A_i}}{W_A^{1/(2+\delta)}},
\]
and $BI_i(T^A_n-T^A_i) = 0$ for $i\ge n+1$.
By Theorem~\ref{thm:embed}, it suffices to show
\[
\max_{i\ge 1}\frac{BI_i(T^A_n-T^A_i)}{n^{1/(2+\delta)}}\convas \max_{i\ge 1}\frac{\sigma_i e^{-T^A_i}}{W_A^{1/(2+\delta)}},
\]
which is proved in \citet[Theorem~1.1(iii)]{athreya:ghosh:sethuraman:2008}.
\end{proof}

With the preparation in Lemma~\ref{lemma:degseq}, we are ready to prove the convergence result in Theorem~\ref{thm:tail_meas}.\\
\smallskip
{\it Proof of Theorem~\ref{thm:tail_meas}.}
Note that the limit random variables $$\sigma_i e^{-T^A_i}W_A^{-1/(2+\delta)},\quad i\ge 1,$$ 
have continuous distributions, so for any $y>0$,
\[
\PP\left(\sum_{i=1}^\infty \epsilon_{\sigma_i e^{-T^A_i}/W_A^{1/(2+\delta)}}(\{y\}) = 0\right) = 1.
\]
Hence, by Kallenberg's theorem for weak convergence to a point process on an interval (see \citet[Theorem~4.18]{kallenberg:2017} and
\citet[Proposition~3.22]{resnick:1987}), proving \eqref{eq:meas_asyA} requires checking
\begin{enumerate}
\item[(a)] For $y>0$, as $n\to\infty$,
\beqq\label{eq:meas_cond1}
\EE \left(\sum_{i=1}^n \epsilon_{D^A_i(n)/n^{1/(2+\delta)}}(y,\infty]\right)
\to \EE\left( \sum_{i=1}^\infty \epsilon_{\sigma_i e^{-T^A_i}/W_A^{1/(2+\delta)}}(y,\infty]\right).
\eeqq
\item[(b)] For $y>0$, as $n\to\infty$,
\begin{align}\label{eq:meas_cond2}
\PP&\left(\sum_{i=1}^n \epsilon_{D^A_i(n)/n^{1/(2+\delta)}}(y,\infty] = 0\right)\nonumber\\
&\longrightarrow \PP\left( \sum_{i=1}^\infty \epsilon_{\sigma_i e^{-T^A_i}/W_A^{1/(2+\delta)}}(y,\infty] = 0\right).
\end{align}
\end{enumerate}

To show \eqref{eq:meas_cond1}, first note that for any $M>0$, 
\begin{align*}
\EE \left(\sum_{i=1}^M \epsilon_{D^A_i(n)/n^{1/(2+\delta)}}(y,\infty]\right)
& =\sum_{i=1}^M \PP\left(\frac{D^A_i(n)}{n^{1/(2+\delta)}}>y\right)\\
&\longrightarrow \sum_{i=1}^M\PP\left(\sigma_i e^{-T^A_i}W_A^{-1/(2+\delta)}>y\right)\\
& = \EE\left( \sum_{i=1}^M \epsilon_{\sigma_i e^{-T^A_i}/W_A^{1/(2+\delta)}}(y,\infty]\right),
\end{align*}
as $n\to\infty$.
By Chebyshev's inequality we have for any $k>2+\delta$,
\begin{align}
\EE \left(\sum_{i=M+1}^n \epsilon_{D^A_i(n)/n^{1/(2+\delta)}}(y,\infty]\right)
& =\sum_{i=M+1}^n \PP\left(\frac{D^A_i(n)}{n^{1/(2+\delta)}}>y\right)\nonumber\\
&\le y^{-k} \sum_{i=M+1}^n \EE\left[\left(\frac{D^A_i(n)}{n^{1/(2+\delta)}}\right)^k\right].\label{eq:tail_meansum}
\end{align}
Also, we have for $\delta\ge 0$,
\[
\EE\left[\left(\frac{D^A_i(n)}{n^{1/(2+\delta)}}\right)^k\right]\le \EE\left[\left(\frac{D^A_i(n)+\delta}{n^{1/(2+\delta)}}\right)^k\right]
\le \EE\left[\left(\frac{\sigma_i e^{-T^A_i}}{W_A^{1/(2+\delta)}}\right)^k\right],
\]
where the last inequality follows from the result in \citet[Equation~(8.7.26)]{vanderHofstad:2017}.
From \citet[Equation~(8.7.22)]{vanderHofstad:2017}, we have
\[
\EE\left[\left(\frac{\sigma_i e^{-T^A_i}}{W_A^{1/(2+\delta)}}\right)^k\right]
= \frac{\Gamma(i-\frac{1}{2+\delta})}{\Gamma(i+\frac{k-1}{2+\delta})}\frac{\Gamma(k+1+\delta)}{\Gamma(1+\delta)}
\sim C_{k,\delta} i^{-\frac{k}{2+\delta}},
\]
for $i$ large and $C_{k,\delta}>0$.
Hence, continuing from \eqref{eq:tail_meansum}, we have 
\begin{align*}
\EE \left(\sum_{i=M+1}^n \epsilon_{D^A_i(n)/n^{1/(2+\delta)}}(y,\infty]\right)
&\le y^{-k} \sum_{i=M+1}^n \EE\left[\left(\frac{D^A_i(n)}{n^{1/(2+\delta)}}\right)^k\right]\\
&\le y^{-k} \sum_{i=M+1}^\infty \EE\left[\left(\frac{\sigma_i e^{-T^A_i}}{W_A^{1/(2+\delta)}}\right)^k\right]\\
& = y^{-k} \sum_{i=M+1}^\infty \frac{\Gamma(i-\frac{1}{2+\delta})}{\Gamma(i+\frac{k-1}{2+\delta})}\frac{\Gamma(k+1+\delta)}{\Gamma(1+\delta)}\\
&\stackrel{M\to\infty}{\longrightarrow} 0,
\end{align*}
since $k/(2+\delta)>1$. This verifies Condition~(a).

To see \eqref{eq:meas_cond2}, we have
\begin{align*}
\left\{\sum_{i=1}^n \epsilon_{D^A_i(n)/n^{1/(2+\delta)}}(y,\infty] = 0\right\}
&= \left\{\frac{D^A_i(n)}{n^{1/(2+\delta)}}\le y, 1\le i\le n\right\}\\
&=\left\{\max_{1\le i\le n} \frac{D^A_i(n)}{n^{1/(2+\delta)}}\le y\right\}.
\end{align*}
Since we set $D^A_i(n) = 0$ for all $i\ge n+1$, then
\[
\left\{\max_{1\le i\le n} \frac{D^A_i(n)}{n^{1/(2+\delta)}}\le y\right\}
= \left\{\max_{i\ge 1} \frac{D^A_i(n)}{n^{1/(2+\delta)}}\le y\right\}.
\]
Similarly,
\[
\left\{\sum_{i=1}^\infty \epsilon_{\sigma_i e^{-T^A_i}/W_A^{1/(2+\delta)}}(y,\infty] = 0\right\}
= \left\{\max_{i\ge 1} \frac{\sigma_i e^{-T^A_i}}{W_A^{1/(2+\delta)}} \le y\right\}.
\]
By \eqref{eq:max_asyA}, we have for $y>0$,
\[
\PP\left(\max_{i\ge 1} \frac{D^A_i(n)}{n^{1/(2+\delta)}}\le y\right)\to \PP\left(\max_{i\ge 1} \frac{\sigma_i e^{-T^A_i}}{W_A^{1/(2+\delta)}} \le y\right),\quad \text{as }n\to\infty,
\]
which gives \eqref{eq:meas_cond2} and completes the proof of (iv).

\section{Consistency of Hill Estimator.}\label{sec:Hill}
\sid{We now turn to \eqref{eq:bnkD} as preparation for considering
consistency of the Hill estimator.} We first give a \sid{plausibility argument}
 based on the form of the limit point measure in \eqref{eq:meas_asyA}
or \eqref{eq:meas_asyB}. However,
proving \eqref{eq:bnkD} requires showing $N_{>k}(n)/n$ concentrates on
$p_{>k}$, for all $k\ge 1$, which in other words means controlling the
bias for $N_{>k}(n)/n$ and the discrepancy between $E(N_{>k}(n)/n)$
and $p_{>k}$.
Later we will show this is true for our Model A
but we were not successful for  Model B. See  Remark~\ref{rmk:modelB}.

\subsection{Heuristics.}
Before starting formalities, \sid{here is a} heuristic explanation for
the consistency of the Hill estimator \sid{when applied to preferential
attachment data} from Model A. The heuristic is the same for both
Model A and B so for simplicity, we focus on Model A \wtd{and apply} the Hill
estimator to the limit points in 
\eqref{eq:meas_asyA}. 
Since the Gamma random variables $\sigma_i$
have light tailed distributions, one may expect that $\{\sigma_i: i\ge
1\}$ will not distort the consistency result and so we pretend the
$\sigma_i$'s are absent; then what remains \wtd{in} the limit points is
monotone in $i$. Set $Y_i := e^{-T^A_i}/W_A^{1/(2+\delta)}$ and apply
the Hill estimator to the $Y's$ to get
\begin{align*}
H_{k,n} =&\frac 1k \sum_{i=1}^k \log \Bigl(\frac{Y_i}{Y_{k+1}}\Bigr)
=\frac 1k \sum_{i=1}^k( T^A_{k+1} -T^A_i).
\intertext{
Recall from just after \eqref{transprob1} that
$$T_{n+1}^A-T_n^A \stackrel{d}{=} E_n/(n(2+\delta)),$$ 
where $E_n, n\geq 1$ are iid unit exponential random variables. Then}
H_{k,n}=&\frac 1k \sum_{i=1}^k \sum_{l=i}^k(T^A_{l+1}-T_l^A)=\frac 1k
          \sum_{l=1}^k l(T_{l+1}^A-T^A_l)=\frac 1k \sum_{l=1}^k \frac{E_l}{2+\delta}\convas\frac{1}{2+\delta},
\end{align*}
by strong law of large numbers, provided that $k\to\infty$.

There are clear shortcomings to this approach, the most obvious being
that we 
only dealt with the points at asymptopia rather than $\{D_i(n), 1 \leq
i \leq n\}$. Furthermore we simplified the limit points by neglecting
the $\sigma_i$'s.
We have not found an effective
way to  analyze order statistics of $\{\sigma_i
  e^{-T^A_i}/W_A^{1/(2+\delta)}: i\ge 1\}$.

Concentration results for degree counts \sid{provide a traditional tool to}
prove \eqref{eq:bnkD} and we pursue this for \sid{Model A} in the next subsection.

\subsection{Concentration of the degree sequence in Model A}
We begin with considering the sequence of degree counts $\{N_{>k}(n)\}_{k\ge 1}$.
Theorem~\ref{thm:concentrate} shows that $N_{>k}(n)/n$ concentrates on $p_{>k}$, for all $k\ge 1$.
This concentration is what is needed for the consistency of the Hill estimator for network data.
Note that for the linear preferential attachment model, the concentration result for $N_k(n)$ is known from \citet[Theorem~8.3]{vanderHofstad:2017}.

\begin{Theorem}\label{thm:concentrate}
For $\delta>-1$ there exists a constant $C>0$, such that as $n\to\infty$,
\beqq\label{eq:concentrate}
\PP\left(\max_k |N_{>k}(n) - np_{>k}|\ge C(1+\sqrt{n\log n})\right) = o(1).
\eeqq
\end{Theorem}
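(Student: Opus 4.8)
The plan is to establish the uniform concentration in \eqref{eq:concentrate} by a martingale (Azuma--Hoeffding) argument applied to the graph-generation process, combined with a separate control of the deterministic bias $|\E N_{>k}(n) - np_{>k}|$. The first step is to reduce the supremum over $k$ to a statement about a single Doob martingale. Fix $k\ge 1$ and consider the filtration $\mathcal{F}_m = \sigma(G(1),\dots,G(m))$ generated by the first $m$ stages of the Model A construction, so that $M_m := \E[N_{>k}(n)\mid \mathcal{F}_m]$ is a martingale with $M_n = N_{>k}(n)$ and $M_0 = \E N_{>k}(n)$. The key structural fact, special to Model A, is that adding node $v_{m+1}$ and its single edge changes the degrees of exactly two nodes (the new node, born with degree $1$, and the one existing node it attaches to), so the one-step effect on $N_{>k}$ is bounded: revealing the attachment choice at step $m+1$ alters the conditional expectation of $N_{>k}(n)$ by at most a bounded constant. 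This bounded-difference property gives $|M_m - M_{m-1}|\le c$ for some absolute $c$, and Azuma's inequality then yields, for each fixed $k$,
\begin{equation}\label{eq:azuma-plan}
\PP\bigl(|N_{>k}(n) - \E N_{>k}(n)| \ge x\bigr) \le 2\exp\Bigl(-\frac{x^2}{2c^2 n}\Bigr).
\end{equation}

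The second step is to remove the supremum over $k$. Since $N_{>0}(n)=n$ and the total degree is $2n$, we have $N_{>k}(n)=0$ for all $k\ge 2n$, so the maximum in \eqref{eq:concentrate} is effectively over $1\le k\le 2n$, a polynomial number of values. Choosing $x = C_1\sqrt{n\log n}$ in \eqref{eq:azuma-plan} makes each tail probability at most $n^{-C_2}$ with $C_2$ large once $C_1$ is chosen large, and a union bound over the $O(n)$ values of $k$ still leaves the total probability $o(1)$. This handles the stochastic fluctuation of each $N_{>k}(n)$ around its own mean uniformly in $k$, at the cost of the $\sqrt{n\log n}$ rate rather than $\sqrt n$.

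The third step is to control the bias $\max_k|\E N_{>k}(n) - np_{>k}|$ by a constant independent of $n$. Here I would set up a recursion for $\E N_{>k}(n)$ directly from the attachment probabilities: using $\frac{D_i(n)+\delta}{(2+\delta)n}$ from Model A, one gets
\begin{equation}\label{eq:recur-plan}
\E[N_{>k}(n+1)\mid \mathcal{F}_n] = N_{>k}(n) + \frac{N_k(n)(k+\delta)}{(2+\delta)n},
\end{equation}
since the only way $N_{>k}$ increases is when a node of degree exactly $k$ is chosen. Taking expectations gives a linear recursion relating $\E N_{>k}(n+1)$, $\E N_{>k}(n)$ and $\E N_k(n)$; substituting the known limit $p_{>k}$ (which by construction satisfies the stationary version of this recursion, cf. \citet[Equation (8.4.6)]{vanderHofstad:2017}) and writing $a_k(n):=\E N_{>k}(n)-np_{>k}$, one obtains a contraction-type recursion for $a_k(n)$ whose solution stays bounded uniformly in $k$ and $n$. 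This is the standard mean-value computation underlying \eqref{eq:def_pk}, carried out with explicit error tracking rather than only in the limit.

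The main obstacle I anticipate is the third step: getting the bias bound to be \emph{uniform in $k$}. For each fixed $k$ the convergence $\E N_{>k}(n)/n\to p_{>k}$ is routine, but the recursion \eqref{eq:recur-plan} couples level $k$ to level $k$ through $N_k(n)=N_{>k-1}(n)-N_{>k}(n)$, so one must propagate the error estimates through all levels simultaneously and verify the constant does not degrade as $k$ grows (even though for large $k$ the counts $N_{>k}(n)$ are themselves small). The cleanest route is likely an induction on $k$ that bounds $\sup_n|a_k(n)|$ by a $k$-independent constant, using that the multiplicative factor $(k+\delta)/((2+\delta)n)$ in \eqref{eq:recur-plan} combined with the decay of $p_k$ keeps the accumulated error summable. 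The Azuma step \eqref{eq:azuma-plan}, by contrast, is comparatively soft once the bounded-difference constant $c$ is identified, and it is precisely the single-edge-per-step feature of Model A (absent in Model B, where a self-loop can be added) that makes that constant clean — which is the structural reason the argument succeeds for Model A but not for Model B.
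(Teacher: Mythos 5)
Your proposal matches the paper's proof in all essentials: the paper likewise splits the problem into per-$k$ martingale concentration at rate $\sqrt{n\log n}$ (citing van der Hofstad's Proposition~8.4, which is exactly the Azuma argument you sketch), a union bound over the at most $n$ relevant values of $k$, and a uniform-in-$(k,n)$ bound on the bias $\varepsilon_{>k}(n)=\mu_{>k}(n)-np_{>k}$ obtained from precisely your recursion via a double induction on $k$ and $n$. The only detail the paper supplies that you leave open is how the induction closes when $k>(2+\delta)n-\delta$ (where the convex-combination coefficient would turn negative): there $N_{>k}(n+1)=0$, so $|\varepsilon_{>k}(n+1)|=(n+1)p_{>k}\le C_p(n+1)^{-(1+\delta)}$, which is exactly the ``decay of $p_{>k}$'' mechanism you anticipated.
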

\begin{proof}
Let $\mu_{>k}(n) := \EE(N_{>k}(n))$.
Following the proof in \citet[Proposition~8.4]{vanderHofstad:2017}, we have for any $C_\mu>2\sqrt{2}$,
\[
\PP\left(|N_{>k}(n) - \mu_{>k}(n)|\ge C_\mu\sqrt{n\log n}\right) = o(1/n).
\]
Since $N_{>k}(n) = 0$ a.s. for all $k> n$, then
\begin{align}\label{eq:concen1}
\PP &\left(\max_k |N_{>k}(n) - \mu_{>k}(n)|\ge C_\mu\sqrt{n\log n}\right)\nonumber\\
=& \PP\left(\max_{0\le k\le n} |N_{>k}(n) - \mu_{>k}(n)|\ge C_\mu\sqrt{n\log n}\right)\nonumber\\
\le & \sum_{k=1}^n \PP\left(|N_{>k}(n) - \mu_{>k}(n)|\ge C_\mu\sqrt{n\log n}\right) = o(1).
\end{align} 
\wtd{Note that \eqref{eq:concen1} also holds for Model B, but we do not succeed in proving
the concentration result later in \eqref{eq:claim} for Model B; see Remark~\ref{rmk:modelB} for details.}
We are now left to show the concentration of $\mu_{>k}(n)$ on $n p_{>k}$ \wtd{in the setup of Model A}.
We claim that
\beqq\label{eq:claim}
|\mu_{>k}(n) - n p_{>k}|\le C',\qquad \wtd{\forall n\ge 1,\quad\forall k\ge 1.}
\eeqq
for some constant $C'>0$ specified later.
We prove \eqref{eq:claim} by induction.
First, by model construction, $N_{>k} (n)$ satisfies 
\begin{align*}
\EE(N_{>k}(n+1)|G(n)) &= N_{>k}(n) + \frac{k+\delta}{(2+\delta)n} N_k(n) \\ 
&= N_{>k}(n) + \frac{k+\delta}{(2+\delta)n} (N_{>k-1}(n)-N_{>k}(n)), \qquad k \ge 1.
\end{align*}
Therefore,
\begin{align}\label{eq:mu}
\mu_{>k}(n+1) 
&= \mu_{>k}(n) + \frac{k+\delta}{(2+\delta)n} (\mu_{>k-1}(n)-\mu_{>k}(n)), \qquad k \ge 1.
\end{align}
Moreover, 
\wtd{it follows from \eqref{eq:pmfk} and \eqref{eq:def_pk} that
\[
p_{>k} = \frac{k+\delta}{2+\delta} p_k, \qquad k \ge 1.
\]
Thus}
$p_{>k}$ satisfies the recursion
\beqq\label{eq:pk}
p_{>k} = \frac{k+\delta}{2+\delta} (p_{>k-1}-p_{>k}), \qquad k \ge 1,
\eeqq
since $p_k = p_{>k-1}-p_{>k}$.
Let $\varepsilon_{>k}(n) := \mu_{>k}(n) - np_{>k}$, then \eqref{eq:mu} and \eqref{eq:pk} give that for $k\ge 1$,
\beqq\label{eq:diff}
\varepsilon_{>k}(n+1) = \left(1-\frac{k+\delta}{(2+\delta)n}\right) \varepsilon_{>k}(n) + \frac{k+\delta}{(2+\delta)n} \varepsilon_{>k-1}(n).
\eeqq

\wtd{In order to prove \eqref{eq:claim}, we initiate the induction procedure by
first inducting} on $n$ to prove 
\beqq\label{eq:base2}
|\varepsilon_{>1}(n)|\le 1,\qquad\wtd{\forall n\ge 1}.
\eeqq
When $n = 1$, the graph $G(1)$ consists of one node and $D_1(1)  =2$. Since $p_{>k}\le 1$, we have
\beqq\label{eq:base1}
|\varepsilon_{>k}(1)| = |\mu_{>k}(1) -  p_{>k}| \le 1,\qquad \wtd{\forall k\ge 1},
\eeqq
\wtd{which also implies $|\varepsilon_{>1}(1)|\le 1$. Assume $|\varepsilon_{>1}(n)|\le 1$
and we want to show $|\varepsilon_{>1}(n+1)|\le 1$.}
Note that $ \varepsilon_{>0}(n) = \wtd{\mu_{>0}(n) - n p_{>0} = n-n\cdot 1} =0$, then \wtd{when $k=1$,} \eqref{eq:diff} becomes
\[
\varepsilon_{>1}(n+1) = \left(1-\frac{1+\delta}{(2+\delta)n}\right) \varepsilon_{>1}(n),
\]
and $1-\frac{1+\delta}{(2+\delta)n} \ge 0$ for $n\ge 1$.
This gives 
\[
|\varepsilon_{>1}(n+1)| \le \left(1-\frac{1+\delta}{(2+\delta)n}\right) |\varepsilon_{>1}(n)| \le 1.
\]
Hence, \eqref{eq:base2} is verified, \wtd{which gives the initialization step of the induction.}

\wtd{Since proving \eqref{eq:claim} requires showing 
\beqq\label{eq:hypo2}
\sup_{n\ge 1}\left|\varepsilon_{>k}(n)\right|\le C_p,\qquad \forall k\ge 1,
\eeqq
for some constant $C_p$ which will be defined later, we verify \eqref{eq:hypo2} by inducting on $k$.
What is proved in \eqref{eq:base2} gives the initialization of the induction ($k=1$) and
we want to verify
$$\left|\varepsilon_{>k}(n)\right|\le C_p,\qquad \forall n\ge 1,$$ 
assuming} 
\beqq\label{eq:inductn2}
\left|\varepsilon_{>k-1}(n)\right|\le C_p,\qquad \forall n\ge 1,
\eeqq
for some $k\ge 2$.
To do this, we again use induction on $n$, with the result for the base case $n=1$ being verified in \eqref{eq:base1}. 
\wtd{We now need to show $\left|\varepsilon_{>k}(n+1)\right|\le C_p$, given both $\left|\varepsilon_{>k}(n)\right|\le C_p$ and \eqref{eq:inductn2}.}

The recursion in \eqref{eq:diff} gives that for $2\le k\le (2+\delta)n - \delta$, 
\[
|\varepsilon_{>k}(n+1)| \le \left(1-\frac{k+\delta}{(2+\delta)n}\right) |\varepsilon_{>k}(n)| + \frac{k+\delta}{(2+\delta)n} |\varepsilon_{>k-1}(n)|
\le 1.
\]
For $k> (2+\delta)n - \delta$, $$|\varepsilon_{>k}(n+1)|  = (n+1)p_{>k}.$$
Since $(2+\delta)n - \delta \ge n+1$ for $\delta>-1$, $n\ge1$, \wtd{we apply \eqref{eq:def_pk} and} there exists a $C_p=C_p(\delta)$ such that
\[
p_{>k} \le C_p (n+1)^{-(2+\delta)},
\]
which gives 
\[
|\varepsilon_{>k}(n+1)|  = (n+1)p_{>k} \le C_p (n+1)^{-(1+\delta)}\le C_p.
\]
Thus, the claim in \eqref{eq:claim} is verified with 
\[
C':= \max\{1, C_p\}.
\]
\wtd{With the result in \eqref{eq:concen1}, the proof of the theorem is complete by choosing
$C = \max\{ C_\mu, C'\}$.}
\end{proof}
\begin{Remark}\label{rmk:modelB} {\rm
The induction argument does not suffice to prove
 \eqref{eq:claim} for Model B.
To see this, we re-compute the recursion on the difference term $\varepsilon_{>k}(n)$ for Model B and \eqref{eq:diff} then becomes
\begin{align*}
\varepsilon_{>k}(n+1) =& \left(1-\frac{k+\delta}{(2+\delta)n+1+\delta}\right) \varepsilon_{>k}(n) + \frac{k+\delta}{(2+\delta)n+1+\delta} \varepsilon_{>k-1}(n)\\
&+\left(\frac{1}{2+\delta}-\frac{n}{(2+\delta)n+1+\delta}\right) (k+\delta) (p_{>k-1}-p_{>k})\\
=& \left(1-\frac{k+\delta}{(2+\delta)n+1+\delta}\right) \varepsilon_{>k}(n) + \frac{k+\delta}{(2+\delta)n+1+\delta} \varepsilon_{>k-1}(n)\\
&+\frac{1+\delta}{2+\delta}\frac{(k+\delta)p_k}{(2+\delta)n+1+\delta}.
\end{align*}
By \citet[Exercise 8.19]{vanderHofstad:2017}, $(k+\delta)p_k\le 2+\delta$.
Therefore, if $|\varepsilon_{>k}(n)|\le 1$, then
\begin{align*}
|\varepsilon_{>k}(n+1)|\le 1+ \frac{1}{n+1},
\end{align*}
which contradicts the induction hypothesis.

Since the concentration inequality proved in Theorem~\ref{thm:concentrate} cannot be validated for Model B by induction,
we are also not able to verify the consistency of the Hill estimator in Model B, using the proof steps proposed here. This
will be deferred as future research.}
\end{Remark}

\subsection{Convergence of the tail empirical measure for Model A}
We then use the concentration result in \eqref{eq:concentrate} to analyze the convergence of the tail empirical measure.
First consider the degree of each node in $G(n)$,
\[
(D_1(n),D_2(n),\ldots, D_n(n)),
\]
and let 
\[
D_{(1)}(n) \ge D_{(2)}(n) \ge \cdots \ge D_{(n)}(n)
\]
be the corresponding order statistics. Then the tail empirical measure becomes
\[
\hat{\nu}_n (\cdot) := \frac{1}{k_n} \sum_{i=1}^n \epsilon_{D_i(n)/D_{(k_n)}(n)}(\cdot),
\]
for some intermediate sequence $\{k_n\}$, i.e. $k_n\to\infty$ and $k_n/n\to 0$ as $n\to\infty$.
\begin{Theorem}
Suppose that $\{k_n\}$ is some intermediate sequence satisfying 
\beqq\label{cond:kn}
\liminf_{n\to\infty} k_n/(n\log n)^{1/2}>0\quad \text{and}\quad k_n/n\to 0 \quad \text{as}\quad n\to\infty,
\eeqq
 then
\beqq\label{eq:tailmeas}
\hat{\nu}_n \Rightarrow \nu_{2+\delta},
\eeqq
in $M_+((0,\infty])$, where $\nu_{2+\delta}(x,\infty] = x^{-(2+\delta)}$, $x>0$.
\end{Theorem}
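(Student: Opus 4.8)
The plan is to reduce the assertion to the convergence of tail functionals and then feed in the uniform concentration bound from Theorem~\ref{thm:concentrate}. Because the limit measure $\nu_{2+\delta}$ is deterministic and its tail function $x\mapsto x^{-(2+\delta)}$ is continuous and finite on $(0,\infty)$, vague convergence $\hat{\nu}_n\Rightarrow\nu_{2+\delta}$ in $M_+((0,\infty])$ is equivalent to proving, for each fixed $x>0$,
\[
\hat{\nu}_n(x,\infty]=\frac{1}{k_n}\#\{1\le i\le n: D_i(n)>xD_{(k_n)}(n)\}
=\frac{N_{>\lfloor xD_{(k_n)}(n)\rfloor}(n)}{k_n}\convp x^{-(2+\delta)}.
\]
Convergence in probability to the constant $x^{-(2+\delta)}$ for every such $x$ (a dense set of continuity points of the limiting tail) then upgrades to weak convergence of the random measures toward the deterministic $\nu_{2+\delta}$ by the standard criterion for vague convergence on $(0,\infty]$.

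The first substantive step is to locate the random normalization $D_{(k_n)}(n)$. Writing $b_n:=(n/k_n)^{1/(2+\delta)}$ and $c=\Gamma(3+2\delta)/\Gamma(1+\delta)$, I would show
\[
\frac{D_{(k_n)}(n)}{b_n}\convp c^{1/(2+\delta)}.
\]
The argument is a sandwich. The order statistic $D_{(k_n)}(n)$ is characterized by $N_{>D_{(k_n)}(n)}(n)<k_n\le N_{\ge D_{(k_n)}(n)}(n)$, and $k\mapsto N_{>k}(n)$ is nonincreasing. For fixed $\theta>0$, Theorem~\ref{thm:concentrate} gives $N_{>\theta b_n}(n)=np_{>\theta b_n}+O(\sqrt{n\log n})$ uniformly, while $p_{>k}\sim ck^{-(2+\delta)}$ yields $np_{>\theta b_n}\sim c\,\theta^{-(2+\delta)}k_n$. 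The condition $\liminf_n k_n/(n\log n)^{1/2}>0$ from \eqref{cond:kn} is exactly what controls the concentration error against $k_n$, so that $N_{>\theta b_n}(n)/k_n\convp c\,\theta^{-(2+\delta)}$; choosing $\theta$ slightly above and below $c^{1/(2+\delta)}$ straddles the level $k_n$ and forces $D_{(k_n)}(n)/b_n$ into an arbitrarily small neighborhood of $c^{1/(2+\delta)}$.

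With $D_{(k_n)}(n)\approx c^{1/(2+\delta)}b_n$ in hand, the final step substitutes this back into the tail functional. For fixed $x>0$, the same uniform concentration and regular variation of $p_{>\cdot}$ give
\[
\frac{N_{>xD_{(k_n)}(n)}(n)}{k_n}
\approx\frac{n\,p_{>xD_{(k_n)}(n)}}{k_n}
\approx\frac{c\,n\,\bigl(xc^{1/(2+\delta)}b_n\bigr)^{-(2+\delta)}}{k_n}
= x^{-(2+\delta)},
\]
where the last equality uses $n\,b_n^{-(2+\delta)}=k_n$. A Slutsky/continuous-mapping argument handles the composition of the convergent random threshold with the eventually monotone, regularly varying map $k\mapsto np_{>k}$, and the floor function perturbs the argument by at most one unit, which is immaterial once $D_{(k_n)}(n)\to\infty$.

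The main obstacle is the interaction between the random endpoint $xD_{(k_n)}(n)$ and the count $N_{>\cdot}(n)$: one cannot simply invoke a pointwise law of large numbers at a fixed threshold, which is precisely why the \emph{uniform-in-$k$} concentration of Theorem~\ref{thm:concentrate} is indispensable. The delicate quantitative point is that the additive concentration error is of order $\sqrt{n\log n}$, so the lower bound on $k_n$ in \eqref{cond:kn} is not cosmetic: it guarantees that this error, after division by $k_n$, does not corrupt the leading term in the two displays above. A secondary technical nuisance is the integer-valued nature of the degrees, which creates possible ties at the level $D_{(k_n)}(n)$; I would dispose of these by noting that the number of nodes of any fixed large degree is $o(k_n)$, again a consequence of the concentration estimate, so that the order-statistic inequalities pin $N_{>D_{(k_n)}(n)}(n)$ down to within $o(k_n)$ of $k_n$.
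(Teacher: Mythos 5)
Your proposal is correct and rests on the same two pillars as the paper's proof---the uniform concentration bound of Theorem~\ref{thm:concentrate} and the tail asymptotics $p_{>k}\sim c\,k^{-(2+\delta)}$ from \eqref{eq:def_pk}---but it packages them genuinely differently. The paper's Step 1 proves $D_{([k_nt])}(n)/b(n/k_n)\convp t^{-1/(2+\delta)}$ for \emph{every} fixed $t>0$ (its $b(n/k_n)$ already absorbs the constant $c^{1/(2+\delta)}$, so the case $t=1$ is exactly your localization of $D_{(k_n)}(n)$); Step 2 upgrades this to convergence in $D(0,\infty]$ and inverts to obtain the tail measure under the \emph{deterministic} scaling $b(n/k_n)$ in \eqref{eq:step2}, recording the joint convergence \eqref{eq:jointconv}; Step 3 swaps in the random normalization $D_{(k_n)}(n)$ via continuity of the scaling operator $S(\nu,c)(A)=\nu(cA)$. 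You instead (i) reduce weak convergence to the deterministic, atomless limit $\nu_{2+\delta}$ to pointwise convergence in probability of $\hat{\nu}_n(x,\infty]$, a legitimate reduction precisely because the limit is nonrandom; (ii) prove the quantile localization only at the single level $k_n$ by a sandwich using the order-statistic inequalities; and (iii) evaluate the uniform-in-$k$ concentration bound directly at the random threshold $xD_{(k_n)}(n)$. Your route is more elementary---no $D(0,\infty]$ topology, no inversion, no continuous-mapping step---and it isolates why uniformity in $k$ is the indispensable feature of Theorem~\ref{thm:concentrate}. What the paper's longer route buys is intermediate output that gets reused verbatim in the following subsection: \eqref{eq:step1}, \eqref{eq:claim} and \eqref{eq:conv_poverk} are the working parts of the consistency proof for the Hill estimator.

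One caveat, which you inherit from the paper rather than introduce: both arguments ultimately need deviations of size $\eta k_n$ for \emph{arbitrarily small} $\eta>0$ (your $\theta$ must approach $c^{1/(2+\delta)}$; in the paper, letting $\epsilon\downarrow 0$ makes $t-u_t^{-(2+\delta)}$ arbitrarily small) to dominate the concentration error $C(1+\sqrt{n\log n})$. The condition $\liminf_{n} k_n/(n\log n)^{1/2}>0$ in \eqref{cond:kn} only gives $\sqrt{n\log n}=O(k_n)$, so the comparison is guaranteed for all $\eta>0$ only under the stronger reading $k_n/(n\log n)^{1/2}\to\infty$ (or a liminf exceeding $C/\eta$). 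Hence your assertion that $N_{>\theta b_n}(n)/k_n\convp c\,\theta^{-(2+\delta)}$ carries exactly the same quantitative imprecision as the paper's claim that the right-hand side of its Step 1 bound vanishes; neither proof is worse off than the other, but both would be airtight with the strengthened growth condition on $k_n$.
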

\begin{proof}
\emph{Step 1.} We first show that for fixed $t>0$,
\beqq\label{eq:step1}
\frac{D_{([k_n t])}(n)}{b(n/k_n)} \convp t^{-\frac{1}{2+\delta}},
\eeqq
where 
$$ b(n/k_n) = \left(\frac{\Gamma(3+2\delta)}{\Gamma(1+\delta)}\right)^{\frac{1}{2+\delta}}(n/k_n)^{\frac{1}{2+\delta}}.$$
Since
\begin{align*}
\PP&\left(\left|\frac{D_{([k_n t])}(n)}{b(n/k_n)}- t^{-\frac{1}{2+\delta}}\right|>\epsilon\right)\\
\le & \PP(D_{([k_n t])}(n) > b(n/k_n) (t^{-\frac{1}{2+\delta}}+\epsilon)) + \PP(D_{([k_n t])}(n)< b(n/k_n)(t^{-\frac{1}{2+\delta}}-\epsilon))\\
=: &\, I+II,
\end{align*}
it suffices to show $I,II\to 0$ as $n\to\infty$.
For the first term, we have, with $ u_t := t^{-\frac{1}{2+\delta}}+\epsilon$,
\begin{align}
I &\le \PP(N_{>[b(n/k_n)u_t]}(n)\ge [k_n t])\nonumber\\
&= \PP\left(N_{>[b(n/k_n)u_t]}(n) - n p_{>[b(n/k_n)u_t]}
 \ge [k_n t]- n p_{>[b(n/k_n)u_t]}\right).\label{eq:part1}
\end{align}
Using Stirling's formula, \citet[Equation~8.3.9]{vanderHofstad:2017} gives 
\[
\frac{\Gamma(t+a)}{\Gamma(t)} = t^a(1+O(1/t)).
\]
Recall the definition of $p_{>k}$ in \eqref{eq:def_pk} for fixed $k$, then we have 
\begin{align}\label{eq:conv_poverk}
\frac{n}{k_n} p_{>[b(n/k_n) y]} &= \frac{n}{k_n} \frac{\Gamma(3+2\delta)}{\Gamma(1+\delta)}\frac{\Gamma\left([b(n/k_n) y]+1+\delta\right)}{\Gamma\left([b(n/k_n) y]+3+2\delta\right)}\nonumber\\
&= \frac{\Gamma(3+2\delta)}{\Gamma(1+\delta)} \frac{n}{k_n} \left(b(n/k_n)y\right)^{-(2+\delta)}\left(1+O\left(\frac{1}{b(n/k_n)}\right)\right)\nonumber\\
&= y^{-(2+\delta)} \left(1+O\left(\frac{1}{b(n/k_n)}\right)\right). 
\end{align}
Continuing from \eqref{eq:part1} then gives
\begin{align*}
I &\le 
\PP\left(N_{>[b(n/k_n)u_t]}(n) - n p_{>[b(n/k_n)u_t]}
 \ge [k_n t]- n p_{>[b(n/k_n)u_t]}\right)\\
\le & \PP\left(\left|N_{>[b(n/k_n)u_t]}(n) - n p_{>[b(n/k_n)u_t]}\right|
 \ge k_n\left(t - u_t^{-(2+\delta)}+O(1/b(n/k_n))\right)\right)\\
 \le & \PP\left(\max_j\left|N_{>j}(n) - n p_{>j}\right|
 \ge k_n\left(t - u_t^{-(2+\delta)}+O(1/b(n/k_n))\right)\right).
\end{align*}
By Theorem~\ref{thm:concentrate}, the right hand side goes to 0 as $n\to\infty$, provided that $k_n$ satisfies \eqref{cond:kn}.
Similarly, we can also show $II\to 0$ as $n\to\infty$ for $k_n$ satisfying \eqref{cond:kn}, thus proving \eqref{eq:step1}.

\emph{Step 2.} Note that $D_{([k_n t])}(n)$ is decreasing in $t$ and the limit in \eqref{eq:step1} is continuous on $(0,\infty]$,
which implies
\[
\frac{D_{([k_n t])}(n)}{b(n/k_n)} \convp t^{-\frac{1}{2+\delta}}, \quad \text{in } D(0,\infty].
\]
This gives, by inversion and \citet[Proposition~3.2]{resnickbook:2007},
\beqq\label{eq:step2}
\frac{1}{k_n} \sum_{i=1}^n \epsilon_{D_i(n)/b(n/k_n)}(t, \infty] \convp t^{-(2+\delta)}, \quad t\in (0,\infty],
\eeqq
in $D(0,\infty]$. Moreover,
\beqq\label{eq:jointconv}
\left(\frac{1}{k_n} \sum_{i=1}^n \epsilon_{D_i(n)/b(n/k_n)}, \frac{D_{([k_n])}(n)}{b(n/k_n)}\right)
\Rightarrow \left(\nu_{2+\delta},1\right)
\eeqq
in $M_+((0,\infty])\times (0,\infty)$.

\emph{Step 3.} With \eqref{eq:step2}, we use a scaling argument to prove \eqref{eq:tailmeas}. Define the operator
\[
S: M_+((0,\infty])\times (0,\infty) \mapsto M_+((0,\infty])
\]
by 
\[
S(\nu, c)(A) = \nu(cA).
\]
By the proof in \citet[Theorem~4.2]{resnickbook:2007}, the mapping $S$ is continuous at $(\nu_{2+\delta},1)$.
Therefore, applying the continuous mapping $S$ to the joint weak convergence in \eqref{eq:jointconv} gives \eqref{eq:tailmeas}.
\end{proof}

\subsection{Consistency of the Hill estimator for Model A}
We are now able to prove the consistency of the Hill estimator applied to $\{D_i(n): 1\le i\le n\}$, i.e.
\[
H_{k_n, n} = \frac{1}{k_n}\sum_{i=1}^{k_n} \log\frac{D_{(i)}(n)}{D_{(k_n+1)}(n)}.
\]
\begin{Theorem}
Let $\{k_n\}$ be an intermediate sequence satisfying \eqref{cond:kn}, then
\[
H_{k_n, n} \convp \frac{1}{2+\delta}.
\]
\end{Theorem}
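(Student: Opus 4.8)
The plan is to deduce the consistency of the Hill estimator directly from the convergence of the tail empirical measure $\hat\nu_n \Rightarrow \nu_{2+\delta}$ established in the previous theorem, following the functional approach of \citet[Theorem~4.2]{resnickbook:2007}. The key observation is that the Hill statistic $H_{k_n,n}$ can be written as a continuous functional applied to $\hat\nu_n$. First I would recall the standard integral representation
\[
H_{k_n,n} = \frac{1}{k_n}\sum_{i=1}^{k_n} \log\frac{D_{(i)}(n)}{D_{(k_n+1)}(n)}
= \int_1^\infty \frac{1}{y}\,\frac{1}{k_n}\sum_{i=1}^n \epsilon_{D_i(n)/D_{(k_n+1)}(n)}(y,\infty]\,\dd y,
\]
which re-expresses the Hill estimator as an integral of the tail of the empirical measure normalized by the $(k_n+1)$-st order statistic $D_{(k_n+1)}(n)$ rather than by the deterministic scaling $b(n/k_n)$.

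The next step is to pass from the $b(n/k_n)$-normalized measure to the self-normalized one. From \eqref{eq:jointconv} I have the joint convergence of $(\hat\nu_n^{\,b}, D_{([k_n])}(n)/b(n/k_n))$ to $(\nu_{2+\delta},1)$, where $\hat\nu_n^{\,b}$ denotes the $b(n/k_n)$-normalized empirical measure. Applying the continuous scaling operator $S(\nu,c)(A)=\nu(cA)$ exactly as in the proof of \eqref{eq:tailmeas} yields $\hat\nu_n \Rightarrow \nu_{2+\delta}$ with the random normalization. I would then define the functional $T(\nu) := \int_1^\infty y^{-1}\,\nu(y,\infty]\,\dd y$ and verify that $H_{k_n,n} = T(\hat\nu_n)$. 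Evaluating $T$ at the limit gives $T(\nu_{2+\delta}) = \int_1^\infty y^{-1} y^{-(2+\delta)}\,\dd y = 1/(2+\delta)$, which is the claimed limit.

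The main obstacle is that the functional $T$ is \emph{not} continuous on all of $M_+((0,\infty])$ in the vague topology, because the integral extends to $+\infty$ and involves an unbounded integration region; vague convergence controls only the behavior on sets bounded away from $0$, so mass escaping to large values of $y$ (or lack of uniform control of the tail) can break continuity. The way to handle this, following \citet{resnickbook:2007}, is to split the integral at a large truncation level $M$: on $(1,M]$ the integrand $y^{-1}\one_{(y,\infty]}$ is bounded and the convergence $\hat\nu_n(y,\infty]\convp y^{-(2+\delta)}$ from \eqref{eq:step2} together with dominated convergence (or a Skorohod representation plus uniform integrability of the decreasing step functions) yields $\int_1^M y^{-1}\hat\nu_n(y,\infty]\,\dd y \convp \int_1^M y^{-1} y^{-(2+\delta)}\,\dd y$; on $(M,\infty]$ I must show the tail contribution $\int_M^\infty y^{-1}\hat\nu_n(y,\infty]\,\dd y$ is asymptotically negligible uniformly in $n$, which requires the concentration bound of Theorem~\ref{thm:concentrate} to control $N_{>j}(n)$ for large $j$ and confirm that no excess mass accumulates at high degrees.

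Finally, I would assemble the two pieces: letting $M\to\infty$ after $n\to\infty$ and using the convergence of $\int_1^M y^{-1} y^{-(2+\delta)}\,\dd y \to 1/(2+\delta)$ together with the uniform smallness of the tail remainder, I conclude $H_{k_n,n} = T(\hat\nu_n)\convp 1/(2+\delta)$. I expect the verification that the large-$y$ tail of the integral is uniformly negligible to be the delicate step, since it is precisely here that the non-iid network structure enters and where the concentration inequality in \eqref{eq:concentrate}, restricted to Model A, is indispensable.
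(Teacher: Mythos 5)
Your proposal is correct and follows essentially the same route as the paper's own proof: the integral representation $H_{k_n,n}=\int_1^\infty \hat\nu_n(y,\infty]\,\dd y/y$, convergence of the truncated integral over $(1,M]$ from the tail empirical measure convergence, and a second-converging-together argument sending $M\to\infty$ after $n\to\infty$. The only detail you leave unspecified---how the tail piece $\int_M^\infty$ is made uniformly negligible---is carried out in the paper exactly as you anticipate: first de-randomizing the normalization on the event $\bigl|D_{(k_n)}(n)/b(n/k_n)-1\bigr|<\eta$ via \eqref{eq:step1}, then applying Markov's inequality together with the bias bound \eqref{eq:claim} and Karamata's theorem to get a bound of order $M^{-(2+\delta)}$.
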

\begin{proof}
First define a mapping $T: D(0,\infty] \mapsto \RR_+$ by
\[
T(f) = \int_1^\infty f(y)\frac{\dd y}{y},
\]
and note that 
\[
H_{k_n, n} = \int_1^\infty \hat{\nu}_n(y,\infty] \frac{\dd y}{y}.
\]
Therefore, proving the consistency of $H_{k_n, n}$ requires justifying the continuity of the mapping $T$ at $\nu_{2+\delta}$, so that
\[
H_{k_n, n} = \int_1^\infty \hat{\nu}_n(y,\infty] \frac{\dd y}{y}\convp \int_1^\infty \nu_{2+\delta}(y,\infty] \frac{\dd y}{y} =\frac{1}{2+\delta}.
\]
Note that for any $M$ we have
\[
\int_1^M \hat{\nu}_n(y,\infty] \frac{\dd y}{y}\convp \int_1^M \nu_{2+\delta}(y,\infty] \frac{\dd y}{y},
\]
so we only need to show
\[
\int_M^\infty \hat{\nu}_n(y,\infty] \frac{\dd y}{y}\convp \int_M^\infty \nu_{2+\delta}(y,\infty] \frac{\dd y}{y}.
\]

By the second converging together theorem (see \citet[Theorem~3.5]{resnickbook:2007}), 
it suffices to show
\beqq\label{eq:cont_integral}
\lim_{M\to\infty} \limsup_{n\to\infty} \PP\left(\int_M^\infty \hat{\nu}_n(y,\infty] \frac{\dd y}{y}>\varepsilon\right) = 0.
\eeqq
Consider the probability in \eqref{eq:cont_integral} and \wtd{we have}
\begin{align*}
\PP & \left(\int_M^\infty \hat{\nu}_n(y,\infty] \frac{\dd y}{y}>\varepsilon\right)\\
\le\, &\PP\left(\int_M^\infty \hat{\nu}_n(y,\infty] \frac{\dd y}{y}>\varepsilon, \left|\frac{D_{(k_n)}(n)}{b(n/k_n)}-1\right|<\eta\right)\\
&+ \PP\left(\int_M^\infty \hat{\nu}_n(y,\infty] \frac{\dd y}{y}>\varepsilon, \left|\frac{D_{(k_n)}(n)}{b(n/k_n)}-1\right|\ge\eta\right)\\
\le\, \PP & \left(\int_M^\infty \frac{1}{k_n}\sum_{i=1}^n \epsilon_{D_i(n)/b(n/k_n)}((1-\eta)y,\infty] \frac{\dd y}{y}>\varepsilon\right)\\
 +& \PP\left(\left|\frac{D_{(k_n)}(n)}{b(n/k_n)}-1\right|\ge\eta\right) =: A+B.
\end{align*}
By \eqref{eq:step1}, $B\to 0$ as $n\to\infty$, and using the Markov inequality, $A$ is bounded by
\begin{align*}
\frac{1}{\varepsilon} &\EE\left(\int_M^\infty \frac{1}{k_n}\sum_{i=1}^n \epsilon_{D_i(n)/b(n/k_n)}((1-\eta)y,\infty] \frac{\dd y}{y}\right)\\
&= \frac{1}{\varepsilon}\EE\left(\int_{M(1-\eta)}^\infty \frac{1}{k_n}\sum_{i=1}^n \epsilon_{D_i(n)/b(n/k_n)}(y,\infty] \frac{\dd y}{y}\right)\\
&\le \frac{1}{\varepsilon}\int_{M(1-\eta)}^\infty \frac{1}{k_n}\EE\left(N_{>[b(n/k_n) y]}(n)\right) \frac{\dd y}{y}.
\end{align*}
Furthermore, we also have for $y>0$,
\begin{align*}
\Big|\frac{1}{k_n}  &\EE\left(N_{>[b(n/k_n) y]}(n)\right)-y^{-(2+\delta)}\Big|\\
\le  &\frac{1}{k_n}\Big|\EE\left(N_{>[b(n/k_n) y]}(n)\right)- n p_{>[b(n/k_n) y]}\Big|
 + \left|\frac{n}{k_n} p_{>[b(n/k_n) y]} - y^{-(2+\delta)}\right|.
\end{align*}
According to \eqref{eq:claim}, the first term is bounded above by $C'/k_n\to 0$ as $n\to\infty$. 
The second term also goes to 0 by \eqref{eq:conv_poverk} as $n\to\infty$.
Hence, as $n\to\infty$,
\beqq\label{eq:conv_Eoverk}
\frac{1}{k_n}\EE\left(N_{>[b(n/k_n) y]}(n)\right)\rightarrow y^{-(2+\delta)}.
\eeqq
Let $U(t) := \EE(N_{>[t]}(n))$ and \eqref{eq:conv_Eoverk} becomes: for $y>0$,
\[
\frac{1}{k_n} U(b(n/k_n) y) \rightarrow y^{-(2+\delta)},\quad \text{as }n\to\infty.
\]
Since $U(\cdot)$ is a non-increasing function, $U\in RV_{-(2+\delta)}$ by \citet[Proposition 2.3(ii)]{resnickbook:2007}.
Therefore, Karamata's theorem gives
\[
A \le \frac{1}{\varepsilon}\int_{M(1-\eta)}^\infty \frac{1}{k_n}\EE\left(N_{>[b(n/k_n) y]}(n)\right) \frac{\dd y}{y}
\sim C(\delta, \eta) M^{-(2+\delta)},
\]
with some positive constant $C(\delta, \eta)>0$.
Also, $M^{-(2+\delta)}\to 0$ as $M\to\infty$, and \eqref{eq:cont_integral} follows.
\end{proof}

\section{Simulation Studies.}\label{sec:sim}
As noted in Remark~\ref{rmk:modelB}, we \wtd{fail to} prove the consistency of the Hill estimator in Model B 
using the techniques \sid{of} Section~\ref{sec:Hill}. 
In this section, however, we give some simulation results to see how consistent the Hill estimator is in Model B.

The main problem is to choose a proper $k_n$. We adopt the threshold
selection method proposed in \cite{clauset:shalizi:newman:2009},  
which is also widely used in online data sources like KONECT
\cite{kunegis:2013}. \wtd{This
method is encoded in 
the \texttt{plfit} script, which can be found at \url{http://tuvalu.santafe.edu/~aaronc/powerlaws/plfit.r)}.}
Here is a summary of this method that we refer to it as the ``minimum
distance method".
Given a sample of $n$ iid observations, $Z_1,\ldots, Z_n$ from a power
law distribution with tail index $\alpha$, the minimum distance method suggests using
the thresholded data consisting of the $k$ upper-order statistics, $Z_{(1)}\ge\ldots \ge Z_{(k)}$, for estimating $\alpha$. 
The tail index is estimated by
\[
\hat{\alpha}(k):= \left( \frac{1}{k}\sum_{i=1}^{k} \log\frac{Z_{(i)}}{Z_{(k+1)}} \right)^{-1}, \quad k\ge 1.
\]
To select $k$, we first compute
 the Kolmogorov-Smirnov (KS) distance between the
empirical tail distribution of the upper $k$ observations and the power-law
tail with index $\hat{\alpha}(k)$:
\[
d_k:=\supy \left|\frac{1}{k}\sum_{i=1}^n\epsilon_{Z_i/Z_{(k+1)}}(y,\infty]-y^{-\hat{\alpha}(k)}\right|, \quad 1\le k\le n.
\]
Then the optimal $k^*$ is  the one that minimizes the KS distance, i.e.
$$
k^* := \argmin_{1\le k\le n}\, d_k,
$$
and we estimate the tail index and threshold by $\hat{\alpha}(k^*)$ and \wtd{$Z_{(k^*+1)}$} respectively.
This estimator performs well if the thresholded portion
comes from a Pareto tail and also seems effective in a
variety of non-iid scenarios.
  
We chose $\delta = -0.5, 0, 0.5, 1, 2$ then the theoretical tail indices of degree distributions from Model B were equal to $\alpha:=2+\delta=1.5, 2, 2.5, 3, 4$, respectively.
For each value of $\delta$, we also varied the number of edges in the network:
$n=5000, 10000, 50000, 100000$. 
For each combination of $(\alpha, n)$, we \sid{simulated} 500 independent
replications of the preferential attachment network using
\sid{software discussed in \cite{wan:wang:davis:resnick:2017} and
linked to
\url{http://www.orie.cornell.edu/orie/research/groups/multheavytail/software.cfm}.}
For each replication we
computed $\hat{\alpha}(k^*)$  using the minimum distance method. We
recorded the mean of those 500 estimates in the corresponding entry of
Table~\ref{Table:alpha_star}, based on the combination of $(\alpha,
n)$. 
\begin{table}[h]
\centering
\begin{tabular}{l|cccc}
\hline
& \multicolumn{4}{c}{Number of Edges} \\
 & 5000 & 10000 & 50000 & 100000 \\ 
\hline 
$\alpha = 1.5$ & 1.481 & 1.484 & 1.484 & 1.488 \\ 
$\alpha = 2$ & 2.061 & 2.028 & 1.998 & 1.990\\ 
$\alpha = 2.5$ & 2.602 & 2.557 & 2.507 & 2.494 \\ 
$\alpha = 3$ & 3.135 & 3.079 & 3.045 & 2.983 \\ 
$\alpha = 4$ & 3.957 & 3.930 & 3.942 & 3.932 \\ 
\hline
\end{tabular} 
\caption{Mean values of $\hat{\alpha}(k^*)$ over 500 estimates using the minimum distance method, for each combination of $(\alpha, n)$.}
\label{Table:alpha_star}
\end{table}

We see that when $\delta = -0.5 <0 $, i.e. $\alpha = 1.5$, the minimum distance estimate $\hat{\alpha}(k^*)$
consistently underestimates the tail index, even if the number of edges in the network has been increased to $10^5$.
For the cases where $\delta\ge 0$ (i.e. $\alpha\ge 2$), the tail estimates have smaller biases as $n$ increases, as long as 
the tails are not too ``light". When $\alpha = 4$, the tail becomes much lighter. Because of the finite sample bias that may occur while applying the minimum distance method to lighter-tailed power laws, increasing the number of
edges in the network does not significantly improve the bias of estimates.

\begin{figure}[h]
\centering
\includegraphics[scale=.38]{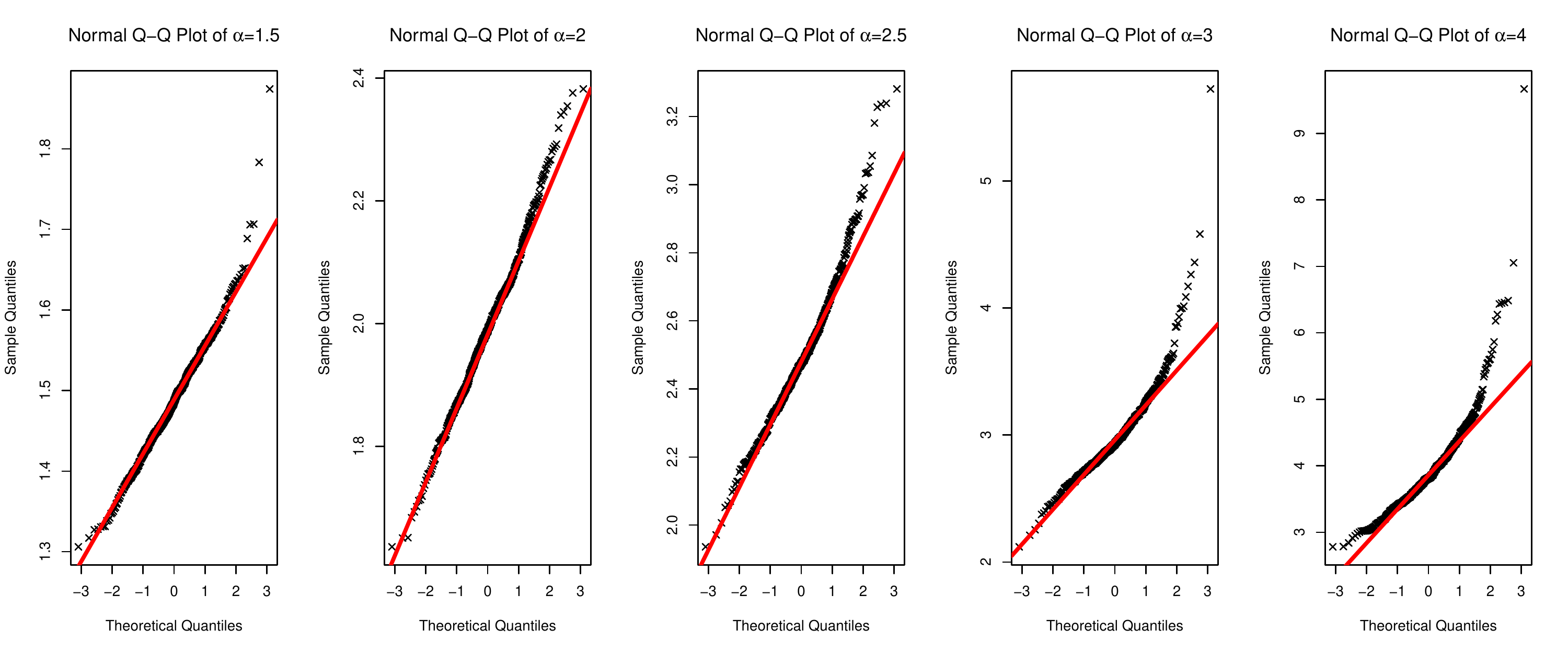}
\caption{QQ plots of $\hat{\alpha}(k^*)$ with $n=10^5$ and $\alpha = 1.5, 2, 2.5, 3, 4$. The fitted lines in red are the traditional qq-lines used to check normality of the estimates.}\label{fig:QQ}
\end{figure}
In Figure~\ref{fig:QQ}, we provide the QQ plots of those 500 minimum distance tail estimates $\hat{\alpha}(k^*)$ while holding $n = 10^5$ and varying $\alpha$ as specified in Table~\ref{Table:alpha_star}. The fitted lines in red are the traditional qq-lines used to check normality of the estimates.
When $\delta\le 0$ (i.e. the cases where $\alpha = 1.5, 2$), QQ plots
\sid{are consistent with} normality of $\hat{\alpha}(k^*)$. 
However, as $\delta$ increases ($\alpha = 2.5, 3, 4$), significant
departures from the normal distribution are observed and asymptotic
normality is \sid{not proven theoretically or empirically.}

In conclusion, for Model B, simulation results \sid{suggest} that the Hill estimator is consistent when $\delta\ge 0$ (i.e. the tail index $\alpha\ge 2$),
but the asymptotic normality is not guaranteed. Since we only have QQ plots of the minimum distance estimates in Figure~\ref{fig:QQ}, it is still not clear whether this non-normality is due to the minimum distance method or the dependence in the network data.
We intend to analyze further the consistency for Model B and other
variants, \wtd{as well as the} asymptotic behavior of the Hill estimator.
\bibliography{./bibfile}
\end{document}